\renewcommand{\thesubsection}{\thesection.\arabic{subsection}}
\renewcommand{\thesubsubsection}{\thesubsection.\Alph{subsubsection}}
\titleformat{\section}[hang]{\normalfont\bfseries}{\thesection}{.5em}{}
\titlelabel{\subsubsection}{\empty}
\titleformat{\subsection}{\normalfont\bfseries}{\thesubsection.}{.5em}{}[]
\titleformat{\subsubsection}[runin]{\normalfont}{\thesubsubsection.}{0em}{}[]
\renewcommand{\abstract}[1]{{\gdef\thepoabstract{#1}}}
\renewcommand\maketitle
\ifdefined\@title{\noindent\Large\bfseries\centering \@title \par}\else\fi
\ifdefined\@author{\centering\normalfont \@author \par}
\ifdefined\thepoaffiliation{\noindent\small\thepoaffiliation\par}\fi\vspace{3em}\else\fi
\ifdefined\thepoabstract{\small\noindent{{\bfseries Abstract}.\;\;}\thepoabstract\par\vspace{2em}}\else\fi
\ifdefined\thepokeywords{{\noindent\bfseries Keywords\;\;}\thepokeywords\par\vspace{5em}}\else\fi
\ifdefined\theporuntitle{\fancyhead[L]{\footnotesize\theporuntitle}}\fi
\definecolor{RefColor}{rgb}{0,0,.85}
\definecolor{UrlColor}{rgb}{.5,.5,.5}%
\setlist[itemize]{leftmargin=1.5em}
\tikzstyle{mybraces}=[mirrorbrace/.style={
\theoremstyle{plain}
\declaretheoremstyle[postheadspace=.4em,headfont=\bfseries,bodyfont=\itshape,spaceabove=8pt,
spacebelow=10pt]{basic}
\theoremstyle{basic}
\declaretheorem[style=basic,name={Theorem}]{theorem}
\declaretheorem[style=basic,sibling=theorem,name={Lemma}]{lemma}
\declaretheorem[style=basic,sibling=theorem,name={Fact}]{fact}
\declaretheorem[style=basic,sibling=theorem,name={Proposition}]{proposition}
\declaretheorem[style=basic,sibling=theorem,name={Corollary}]{corollary}
\theoremstyle{definition}
\declaretheorem[style=definition,name={Remark}]{remark}
\declaretheorem[style=definition,name={Remark},numbered=no]{remark*}
\newcommand{\msup}{\sup\nolimits}
\def\bx{\mathbf{x}}
\def\N{\mathbb{N}}
\def\P{\mathbb{P}}
\def\R{\mathbb{R}}
\def\cN{\mathcal{N}}
\newcommand{\mean}{\mathbb{E}}
\newcommand{\Var}{\text{\rm Var}}
\DeclareMathOperator{\msum}{\medmath\sum}
\DeclareMathOperator{\mint}{\scaleobj{.8}{\int}}
\begin{document}

\title{Slow rates of approximation of U-statistics and V-statistics\\ by quadratic forms of Gaussians}
\author{Kevin Han Huang and Peter Orbanz\\[1em]\footnotesize Gatsby Unit, University College London}

\begin{abstract}
  {
    We construct examples of degree-two U- and V-statistics of $n$ i.i.d.~heavy-tailed random vectors in $\R^{d(n)}$, whose $\nu$-th moments exist for ${\nu > 2}$, and provide tight bounds on the error of approximating both statistics by a quadratic form of Gaussians. In the case ${\nu=3}$, the error of approximation is $\Theta(n^{-1/12})$. The proof adapts a result of Huang, Austern and Orbanz \cite{huang2024gaussian} to U- and V-statistics. The lower bound for U-statistics is  a simple example of the concept of variance domination used in \cite{huang2024gaussian}.
  }
\end{abstract}

\maketitle

\section{Introduction} We consider distributional approximations of degree-two U- and V-statistics that are of the form
\begin{align*}
    u_n(x_1, \ldots, x_n) \;\coloneqq&\; \mfrac{\sum_{1 \leq i \neq j \leq n} k_u(x_i, x_j)}{n(n-1)} 
    &\text{ and }&&
    v_n(x_1, \ldots, x_n) \;\coloneqq&\; \mfrac{\sum_{1 \leq i,j \leq n} k_v(x_i, x_j)}{n^2} \;.
\end{align*}
Here, $x_1, \ldots, x_n \in \R^{d(n)}$ are elements of a high-dimensional Euclidean space, i.e.~the dimension $d(n)$ may depend on $n$, and $k_u: \R^{d(n)} \times \R^{d(n)} \rightarrow \R$ and $k_v: \R^{d(n)} \times \R^{d(n)} \rightarrow \R$ are symmetric functions. These statistics arise frequently in the estimation of two-way interactions. Applications include gene-set testing \citep{chen2010two}, change-point detection \citep{wang2022inference} and kernel-based tests in machine learning \citep{gretton2012kernel}.

\vspace{.5em}

Classically, with $d(n)=d$ fixed, it is well-known that the distributions of both statistics admit either a Gaussian limit or a sum-of-chi-squares limit (up to reweighting and centering of the sum), depending on a degeneracy condition \citep{serfling1980approximation}. Recently, the high-dimensional case, where $d(n) \rightarrow \infty$ as $n \rightarrow \infty$, has gained attention due to its relevance in machine learning applications. Recent work investigates (variants of) U-statistics that admit a Gaussian limit despite degeneracy \citep{chen2010two,yan2021kernel,gao2023dimension,kim2020dimension,shekhar2022permutation}, or a sum-of-chi-squares limit despite non-degeneracy \citep{huang2023high,bhattacharya2022asymptotic}. In all cases, the error of Gaussian approximation for both U- and V-statistics is known to be $O(n^{-1/2})$ \citep{chen2011normal}, and is not improvable \citep{bentkus1994lower}. The behavior of non-Gaussian approximation is more nuanced: The known upper bounds depend on the number of non-zero eigenvalues of the Hilbert-Schmidt operator associated with the kernel $u$ of the U-statistic, ranging from $O(n^{-1})$ for five non-zero eigenvalues \cite{gotze2014explicit}, $O(n^{-1/12})$ for one non-zero eigenvalue \cite{yanushkevichiene2012bounds} (and control of an $18/5$th moment), to $O(n^{-1/14})$ without eigenvalue assumptions \cite{huang2023high}. This note constructs a set of i.i.d.~random vectors $X=(X_i)_{i \leq n}$ in $\R^{d(n)}$, whose $\nu$-th moments exist for $\nu \in (2,3]$, and functions $k_u$ and $k_v$, such that the following error bounds hold for the approximation by a quadratic form of Gaussians.

\begin{theorem} \label{thm:main} Fix $\nu \in (2,3]$. Let $\chi^2_1$ be a chi-squared random variable with $1$ degree of freedom, $\xi \sim \cN(0,1)$ be independent of $\chi^2_1$ and $\overline{\chi^2_1} = \chi^2_1 - 1$. There exist some constants $c, C > 0$, $N \in \N$ and a sequence $(\sigma_n)_{n \in \N}$ with $\sigma_n \rightarrow 0$, where $X$, $k_u$ and $k_v$ depend on $\sigma_n$, such that for all $n > N$ and $d(n) \in \N$,
\begin{align*}
    c n^{-\frac{\nu-2}{4\nu}} 
    \;\leq&\;
    \msup_{t \in \R} \Big| \P\big( \sqrt{n(n-1)} \, u_n(X) \leq t \big) - \P\Big( \, \sigma_n \xi + \overline{\chi_1^2}  \,\leq\, t \, \Big) \Big|
    \;\leq\;
    C n^{-\frac{\nu-2}{4\nu}}\;,
    \\
    c n^{-\frac{\nu-2}{4\nu }} 
    \;\leq&\;
    \msup_{t \in \R} \Big| \P\big( n \, v_n(X) \leq t \big) - \P\big( \, \sigma_n \xi + \chi^2_1 \leq t \, \big) \Big|
    \;\leq\;
    C n^{-\frac{\nu-2}{4\nu}}\;.
\end{align*}
\end{theorem}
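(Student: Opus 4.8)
The plan is to build a rank-one degenerate kernel perturbed by a \emph{heavy-tailed Hájek term}, and to read off both bounds from the structure that results. For $d(n)\ge 2$ I would use the first two coordinates: let $X_{i,2}\sim\mathcal N(0,1)$ and $X_{i,1}$ be a symmetric scalar with a regularly varying tail of index $\nu$, independent of each other, so that $X_{i,1}^{2}$ has tail index $\nu/2\in(1,\tfrac32]$ and hence infinite variance; with $\mu:=\E X_{1,1}^{2}$ set
\[
 k_u(x,y)=k_v(x,y)=x_2y_2+\tfrac{c_n}{n}\big((x_1^{2}-\mu)+(y_1^{2}-\mu)\big),
\]
and for $d(n)=1$ replace $x_2y_2$ by $\psi(x_1)\psi(y_1)$ with $\psi$ a bounded odd function with $\E\psi(X_{1,1})^{2}=1$ (so $\psi(X_{1,1})$ and $X_{1,1}^{2}-\mu$ are uncorrelated). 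Expanding the statistics, using $\tfrac1n\sum_i X_{i,2}^{2}=1+O_P(n^{-1/2})$ and $\tfrac1n\sum_i X_{i,2}^{4}=O_P(1)$, gives $\sqrt{n(n-1)}\,u_n=(\chi_1^{2}-1)+R_n-\sqrt{2/n}\,Z_n+o_P(\rho_n)$ and $n\,v_n=\chi_1^{2}+R_n$, where $S_n:=n^{-1/2}\sum_i X_{i,2}$ (so $S_n^{2}\equiv\chi_1^{2}$ exactly for $d(n)\ge2$), $R_n:=\tfrac{2c_n}{n}\sum_i(X_{i,1}^{2}-\mu)$, $Z_n\sim\mathcal N(0,1)$ approximately and independent of $S_n,R_n$. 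Writing $\delta_n:=n^{2/\nu-1}$, one has $R_n/\rho_n\to S$, a totally skewed $(\nu/2)$-stable law, where $\rho_n:=2c_n\delta_n$; I would choose $c_n$ so that $\rho_n=n^{-(\nu-2)/(2\nu)}$ (hence $c_n\to\infty$ but $c_n/n\to0$) and set $\sigma_n:=\rho_n\to0$. Thus $\sqrt{n(n-1)}u_n$ (resp. $n v_n$) is, up to terms of size $O_P(n^{-1/2})\ll\rho_n$, an exact $\chi_1^{2}-1$ (resp. $\chi_1^2$) plus the independent heavy perturbation $R_n$ of vanishing scale $\rho_n$.

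For the upper bound I would invoke the quadratic-Gaussian approximation theorem of \cite{huang2024gaussian} applied to $u_n$ and $v_n$. Specialized to a kernel with a single Hájek term and a rank-one degenerate part, its bound reduces to moment functionals of the ANOVA components against anti-concentration constants of the limiting quadratic form. The degenerate part $x_2y_2$ (resp.\ $\psi(x_1)\psi(y_1)$) produces the $\chi_1^{2}$ with an error governed by $d_K(S_n,\mathcal N(0,1))$, which is $0$ for $d(n)\ge2$ and $O(n^{-1/2})$ otherwise, i.e.\ $o(r_n)$ with $r_n:=n^{-(\nu-2)/(4\nu)}$; the term that is not $o(r_n)$ is the one generated by the heavy Hájek component, and here one uses that $F_{\chi_1^{2}}$ is $\tfrac12$-Hölder near its minimum: replacing $R_n$ by an independent perturbation of scale $\rho_n$ (in particular by $\sigma_n\xi$) changes $F_{\chi_1^{2}}$ by $O(\sqrt{\rho_n})$. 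The neglected $\pm\sqrt{2/n}\,Z_n$ and $O_P(n^{-1/2})$ pieces contribute $O(n^{-1/4})=o(r_n)$ by the same smoothing estimate. Collecting, $d_K\le C\sqrt{\rho_n}=C\,n^{-(\nu-2)/(4\nu)}$ for both statistics.

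For the lower bound — the "variance domination" half — I would evaluate the Kolmogorov discrepancy at the minimum of the $\chi^{2}$ part, i.e.\ $t=-1$ for $u_n$ and $t=0$ for $v_n$. Since $\chi_1^{2}$ is independent of $R_n$ and $F_{\chi_1^{2}}(s)=\sqrt{2/\pi}\,\sqrt s+O(s^{3/2})$ as $s\downarrow0$,
\[
 \P\big(\chi_1^{2}+R_n\le 0\big)=\sqrt{2/\pi}\,\E\big[\sqrt{(R_n)_-}\big]+o(\sqrt{\rho_n})=\sqrt{2/\pi}\,\sqrt{\rho_n}\,\E\big[\sqrt{(S)_-}\big]+o(\sqrt{\rho_n}),
\]
while $\P(\chi_1^{2}+\sigma_n\xi\le 0)=\sqrt{2/\pi}\,\sqrt{\sigma_n}\,\tfrac12\E\sqrt{|\xi|}+o(\sqrt{\sigma_n})$. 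With $\sigma_n=\rho_n$ the leading terms differ by $\sqrt{2/\pi}\,\sqrt{\rho_n}\,\big(\E\sqrt{(S)_-}-\tfrac12\E\sqrt{|\xi|}\big)$, and since a totally skewed stable law is not Gaussian the bracket is a fixed nonzero constant (if it happened to vanish, perturb $\sigma_n$ by a constant factor or extract the discrepancy from the differing — power-law vs.\ exponential — right tails). Hence $d_K\ge c\,n^{-(\nu-2)/(4\nu)}$, and the neglected lower-order terms are absorbed as above because $R_n/\rho_n\to S$ at a polynomial rate.

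The step I expect to be the main obstacle is pinning the exponent $(\nu-2)/(4\nu)$ in \emph{both} directions: showing that the $\tfrac12$-Hölder modulus of $F_{\chi^{2}}$ at its singular point is exactly what governs the rate, that no coarser term in the \cite{huang2024gaussian} bound (nor the diagonal/Hájek interactions peculiar to U- versus V-statistics) dominates it, and that the lower-bound constant is genuinely nonzero — which requires quantifying the non-Gaussianity of the $(\nu/2)$-stable limit through the gap between fractional moments of its negative part and that of a Gaussian, together with an explicit rate in the stable CLT for $n^{-2/\nu}\sum_i(X_{i,1}^{2}-\mu)$ to control all finite-$n$ corrections.
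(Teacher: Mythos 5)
Your construction is genuinely different from the paper's. The paper builds the Hájek term from a \emph{bounded discrete} three-point variable $U_{\sigma_n}$, calibrated so its $\nu$-th moment stays $O(1)$ while the normalized skewness grows; the linear sum $S_n$ then admits a one-term Edgeworth expansion around $\cN(0,\sigma_n^2)$, with a skewness bump of amplitude $\Theta(n^{-1/2}\sigma_n^{-\nu/(\nu-2)})$ and width $\sigma_n$, and both bounds are read off from convolving that bump against the chi-squared density (Lemmas 19--21 of \cite{huang2024gaussian}, used inside \cref{lem:universality:v}). You instead use a fixed \emph{regularly-varying} distribution whose Hájek sum $R_n$, rescaled by $\rho_n$, converges to a non-Gaussian $(\nu/2)$-stable law $S$, so $R_n$ is never close in Kolmogorov distance to any Gaussian at its own scale. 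Both routes extract the $\sqrt{\rho_n}$ gain from the same source, the $\tfrac{1}{2}$-Hölder left-endpoint singularity of the chi-squared c.d.f., and your U-statistic reduction (strip $\tfrac{1}{n}\sum_i X_{i2}^2$ and dominate its variance) mirrors \cref{prop:VD} and \cref{lem:chi:square}.

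There are, however, genuine gaps. (i) Invoking ``the quadratic-Gaussian approximation theorem of \cite{huang2024gaussian}'' cannot give the claimed rate: that generic result yields $n^{-(\nu-2)/(4\nu+2)}$, which the paper explicitly improves in \cref{lem:universality:v} with a construction-specific convolution argument. Your actual upper-bound mechanism is the assertion that $\sup_t|\P(\chi_1^2+R_n\le t)-\P(\chi_1^2+\sigma_n\xi\le t)|=O(\sqrt{\rho_n})$; since the Kolmogorov distance between $R_n$ and $\sigma_n\xi$ stays $\Theta(1)$, this is not a Hölder estimate on a single c.d.f., but requires bounding $\sup_t\big|\int\big(\P(R_n\le s)-\P(\sigma_n\xi\le s)\big)f_{\chi_1^2}(t-s)\,ds\big|$ using the Wasserstein-$1$ localization (of order $\rho_n$) together with a quantitative stable-CLT tail bound, and that computation is deferred. (ii) For $d(n)=1$, replacing $x_2y_2$ by $\psi(x_1)\psi(y_1)$ with $\psi$ odd gives only uncorrelatedness; the decomposition $nv_n=\chi_1^2+R_n$ then loses independence of the two pieces, which the whole argument uses. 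The paper covers every $d(n)$ uniformly via a measurable map $\phi_{d(n)}\colon\R^{d(n)}\to\R^2$ and the probability-integral transform, and the same device should be used here. (iii) The lower-bound constant hinges on $\E\sqrt{S_-}\neq\tfrac{1}{2}\E\sqrt{|\xi|}$, which is plausible but unproved; your rescaling fix works but must be made explicit, whereas Lemma 21 of \cite{huang2024gaussian} gives a definite-sign lower bound directly. (iv) A regularly-varying tail of index $\nu$ typically has infinite $\nu$-th moment, whereas the paper's $Y_{i;\sigma_n}$ are bounded-plus-Gaussian with all moments finite and the $\nu$-th moment uniformly $O(1)$; if the $\nu$-th moment is required at $\nu$ itself, lighten the tail by a slowly-varying factor.
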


\begin{remark} (i) When $\nu=3$, \cref{thm:main} says that the approximation error is $\Theta(n^{-\frac{1}{12}})$. (ii) In the construction, we choose $\sigma_n$ to decay as $\Theta\big(n^{-\frac{\nu-2}{2\nu}}\big)$. The approximation becomes a chi-squared approximation in the limit $n \rightarrow \infty$, but at a very slow rate. (iii) Since $\xi$ can be obtained as the limiting distribution of a partial sum of weighted and centred chi-squared variables, \cref{thm:main} can be read as a result on the approximation of $u_n(X)$ and $v_n(X)$ by infinite sums of weighted and shifted chi-squares. For further discussions on when a Gaussian component may emerge from an infinite sum of weighted chi-squares, see \cite{bhattacharya2022asymptotic}.
\end{remark}

\cref{thm:main} is an instance of the broader theme of Gaussian universality: Consider a set of i.i.d.~random vectors $Y = (Y_i)_{i \leq n}$ in $\R^b$ and the collection of Gaussian vectors
\begin{align*}
    Z \;=&\; (Z_1, \ldots, Z_n)
    &\text{ where }&&
    Z_i \;\overset{\rm i.i.d.}{\sim}&\; \cN(\mean[Y_1], \Var[Y_1])\;.
\end{align*}
The term Gaussian universality refers to the fact that, for a large class of functions $f: (\R^b)^n \rightarrow \R$, $f(Y)$ is close to $f(Z)$ in distribution as $n \rightarrow \infty$. This phenomenon can be observed even when the law of $Y_1$ is allowed to vary in $n$, and has found applications in probability, statistics and machine learning \citep{tao2011random,tao2015random,bayati2015universality,gerace2024gaussian,montanari2022universality,pesce2023gaussian}. Indeed, a crucial intermediate step in \cref{thm:main} is to write $u_n(X) = \tilde u_n(Y)$ and $v_n(X) = \tilde v_n(Y)$ for some intermediate functions $\tilde u_n$ and $\tilde v_n$, and approximate them by $\tilde u_n(Z)$ and $\tilde v_n(Z)$ respectively. The lower bound of \cref{thm:main} adapts a degree-$m$ polynomial $p^*_m(X)$ constructed in our recent work \cite{huang2024gaussian}---inspired by Senatov \cite{senatov1998normal}---which has a slow universality approximation error on the order $\Omega( n^{-\frac{\nu-2}{2\nu m}} )$. The upper bound of \cref{thm:main} improves upon our generic universality result in \cite{huang2024gaussian} by using an argument specific to $p^*_m(X)$, instead of applying Lindeberg's technique. To extend these results to $u_n$ and $v_n$, our proof also exploits a technique called variance domination in \cite{huang2024gaussian}, which is summarised in \cref{prop:VD} in \cref{sec:variance:domination}. 

\vspace{.5em}

An open question from \cite{huang2024gaussian} was whether the slow $n^{-1/12}$ universality approximation error holds also for degree-two U-statistics and V-statistics, which see more real-life applications. Yanushkevichiene
\cite{yanushkevichiene2012bounds} conjectures that the $n^{-1/12}$ rate is unimprovable for degree-two U-statistics in view of the construction by Senatov \cite{senatov1998normal}. \cref{thm:main} answers this question in the affirmative. An implication is that, without additional structural assumptions on the data distribution or the kernel functions $k_u$ and $k_v$, the slow $n^{-1/12}$ rate of quadratic-form-of-Gaussian approximation for U-statistics and V-statistics is not improvable. In \cref{sec:construction}, we provide the explicit constructions of $k_u$, $k_v$ and $X$. All proofs are included in \cref{sec:proofs}.

\section{Construction of $k_u$, $k_v$ and $X$} \label{sec:construction}

We first recall the lower bound construction from \cite{huang2024gaussian} in the case $m=2$. The construction involves a polynomial $p^*_n: (\R^2)^n \rightarrow \R$ given by 
\begin{align*}
    p^*_n(y_1,\ldots, y_n) 
    \;\coloneqq&\; 
    \mfrac{1}{\sqrt{n}} \msum_{i=1}^n y_{i1}  
    +  
    \Big(\mfrac{1}{\sqrt{n}}
    \msum_{i=1}^n y_{i2}\Big)^2
    \qquad
    \text{ for } y_i = (y_{i1}, y_{i2}) \in \R^2\;.
\end{align*} 
The collection of $\R^2$ random vectors $Y=(Y_1, \ldots, Y_n)$ are generated as follows. For a fixed $\sigma_0 > 0$ and $\nu \in (2,3]$, write $\sigma_n = \min\{\sigma_0 \, n^{-(\nu-2)/2\nu} \,,\, 1\}$. Note that this is the sequence $\sigma_n \rightarrow 0$ in \cref{thm:main}. Let $U_{\sigma_n}$ be the discrete random variable supported at three points with 
\begin{align*}
    U_{\sigma_n}
    \;=\; 
    \begin{cases}
        - 6^{-1/2} \sigma_n^{-2/(\nu-2)} \qquad &\text{ with probability } 2 \sigma_n^{2 \nu / (\nu-2)} \;,
        \\
        0 \qquad &\text{ with probability }  1 - 3 \sigma_n^{2 \nu / (\nu-2)} \;,
        \\
        2 \times 6^{-1/2} \sigma_n^{-2/(\nu-2)}  \qquad &\text{ with probability }  \sigma_n^{2 \nu / (\nu-2)} \;.
    \end{cases}
\end{align*}
$U_{\sigma_n}$ is constructed such that it becomes increasingly heavy-tailed as $\sigma_n \rightarrow 0$.  Now let $ U_{1;\sigma_n}, \ldots, U_{n;\sigma_n}$ be i.i.d.~copies of $U_{\sigma_n}$. The i.i.d.~random vectors $Y=(Y_i)_{i \leq n}$ are generated by
\begin{align*}
    Y_{i;\sigma_n}
    \;\coloneqq\; 
    \Big( \mfrac{1}{\sqrt{2}} U_{i;\sigma_n} + \mfrac{\sigma_n}{\sqrt{2}}\xi_{i1} \,,\, \xi_{i2} \Big) 
    \hspace{3em}
    \;\text{ where }\;\;
    \xi_{11}, \xi_{12}, \ldots, \xi_{n1}, \xi_{n2} 
    \;\overset{\rm i.i.d.}{\sim}\; 
    \cN(0,1)\;.
\end{align*}
To adapt $p^*_n(Y)$ to our setup, we first observe that $p^*_n$ can be rewritten as a V-statistic:
\begin{align*}
    p^*_n(y_1, \ldots, y_n) 
    \;=&\; 
    \mfrac{1}{n}
    \msum_{i,j=1}^n 
    \Big( 
        \mfrac{y_{i1}}{2\sqrt{n}}
        +
        \mfrac{y_{j1}}{2\sqrt{n}}
        +
        y_{i2} \, y_{j2}
    \Big)
    \;=\; 
    n \,
    \tilde v_n(y_1, \ldots, y_n)
    \;,
\end{align*}
where we have defined, for $y_1, \ldots, y_n \in \R^2$ and $a_1, a_2, b_1, b_2 \in \R$,
\begin{align*}
    \tilde v_n(y_1, \ldots,y_n)
    \;\coloneqq\;
    \mfrac{1}{n^2} \msum_{i,j=1}^n \tilde k_v(y_i, y_j)
    \;, 
    \hspace{1.5em}
    \tilde k_v( (a_1, a_2), (b_1, b_2) ) 
    \;\coloneqq\; 
    \mfrac{a_1}{2\sqrt{n}}
    +
    \mfrac{b_1}{2\sqrt{n}}
    +
    a_2 b_2
    \;.
\end{align*}
Moreover, since we have no restrictions on how $d(n)$ depends on $n$, there are non-unique choices of a function $\phi_{d(n)}: \R^{d(n)} \rightarrow \R$ and a probability measure $\mu_{d(n)}$ on $\R^{d(n)}$ such that 
\begin{align*}
    X_1 \;\sim&\; \mu_{d(n)} 
    &\Leftrightarrow&&
    \phi_{d(n)}(X_1) \;\overset{d}{=}&\; Y_{1; \sigma_n}\;.
\end{align*}
Our construction of the V-statistic is thus given by taking $X_i \overset{\rm i.i.d.}{\sim} \mu_{d(n)}$ and $k_v(x_1,x_2) \coloneqq \tilde k_v(\phi_{d(n)}(x_1), \phi_{d(n)}(x_2))$, which gives 
\begin{align*}
    v_n(X) \;=\; \mfrac{1}{n^2} \msum_{1 \leq i,j \leq n} k_v(X_i, X_j) \;\overset{d}{=} 
    \tilde v_n(Y)
    \;=\; 
    \mfrac{1}{n} \, p^*_n(Y)\;,
\end{align*}
where $\overset{d}{=}$ denotes equality in distribution. This makes the lower bound from \citep{huang2024gaussian} immediately applicable. For the U-statistics construction, we observe that
\begin{align*}
    p^*_n(y_1, \ldots, y_n) 
    \;=&\; 
    \mfrac{1}{\sqrt{n(n-1)}} 
    \msum_{i \neq j}^n 
    \Big( 
        \mfrac{y_{i1}}{2\sqrt{n-1}}
        +
        \mfrac{y_{i2}}{2\sqrt{n-1}}
        +
        \mfrac{\sqrt{n-1}}{\sqrt{n}} \, y_{i2} y_{j2}
    \Big)
    +
    \mfrac{1}{n} \msum_{i=1}^n y_{i2}^2
    \\
    \;=&\;
    \sqrt{n(n-1)} \, \tilde u_n(y_1, \ldots, y_n) 
    + 
    R_n(y_1, \ldots, y_n)
    \;,
\end{align*}
where we have defined, for $y_1, \ldots, y_n \in \R^2$ and $a_1, a_2, b_1, b_2 \in \R$,
\begin{align*}
    &\tilde u_n(y_1, \ldots, y_n) 
    \;\coloneqq\; 
    \mfrac{1}{n(n-1)} \msum_{i \neq j} \tilde k_u(y_i, y_j)
    \;,
    \hspace{1.5em}
    R_n(y_1, \ldots, y_n) 
    \;\coloneqq\;
    \mfrac{1}{n} \msum_{i=1}^n y_{i2}^2\;,
    \\
    &\tilde k_u( (a_1, a_2), (b_1, b_2) ) 
    \;\coloneqq\; 
    \mfrac{a_1}{2\sqrt{n-1}}
    +
    \mfrac{b_1}{2\sqrt{n-1}}
    +
    \mfrac{\sqrt{n-1}}{\sqrt{n}} \, a_2 b_2\;.
\end{align*}
Therefore, our construction of the U-statistic is to take $k_u(x_1,x_2) \coloneqq \tilde k_u(\phi_{d(n)}(x_1), \phi_{d(n)}(x_2))$, which gives 
\begin{align*}
    u_n(X) 
    \;=\; 
    \mfrac{1}{n(n-1)} \msum_{1 \leq i \neq j \leq n} k_u(X_i, X_j) 
    \;\overset{d}{=} 
    \tilde u_n(Y)
    \;=\; 
    p^*_n(Y) - R_n(Y)\;.
\end{align*}
The main technical task is thus to show that $p^*_n(Y)$ approximates a chi-squared distribution and that $R_n(Y)$ has negligible effect other than centering the chi-squared distribution.

\section{Variance domination} \label{sec:variance:domination}

One main technique used in the proofs is the idea of variance domination: When computing the limit of a sum of two dependent, real-valued random variables $X'+Y'$, it suffices to ignore $Y'$ in the limit provided that the variance of $Y'$ is negligible compared to that of $X'$. The same idea is used in \cite{huang2024gaussian} for extending their universality results to non-polynomial functions. The next result summarises the technique.

\begin{proposition} \label{prop:VD} Let $X'$ and $Y'$ be two $\R$-valued, possibly dependent random variables with $\mean[Y'] = 0$. Then for every $t \in \R$, 
\begin{align*}
    \big| \P( X'+Y' \leq t )  - &\P( X' \leq t ) \big|
    \\
    &\;\leq\;
    \inf_{\epsilon > 0}
    \Big(
    \max\big\{ \;
        \P( X' \in (t-\epsilon, t]) 
    \,,\, 
        \P( X' \in (t, t+\epsilon]) 
    \big\}
    + 
    \mfrac{\Var[Y']}{ \epsilon^2}
    \Big)\;.
\end{align*}
If we further have $\sigma^2_{X'} \coloneqq \Var[X'] > 0$, then 
\begin{align*}
    \big| \P\big(  \sigma^{-1}_{X'}  & (X'+Y') \leq t  \big)  
    - 
    \P\big( \sigma^{-1}_{X'} X' \leq t \big) \big|
    \\
     &\;\leq\;
    \inf_{\epsilon > 0}
    \Big(
    \max\big\{ \;
        \P( \sigma^{-1}_{X'}  X' \in (t-\epsilon, t]) 
    \,,\, 
        \P( \sigma^{-1}_{X'}  X' \in (t, t+\epsilon]) 
    \big\}
    + 
    \mfrac{\Var[Y']}{\Var[X'] \, \epsilon^2}
    \Big)\;.
\end{align*}
\end{proposition}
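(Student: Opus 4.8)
The plan is to prove the first inequality directly by a truncation/conditioning argument and then reduce the second inequality to the first by a change of variables. For the first bound, fix $t \in \R$ and $\epsilon > 0$. The starting observation is that the event $\{X'+Y' \leq t\}$ differs from $\{X' \leq t\}$ only when $X'$ lies within distance roughly $|Y'|$ of the threshold $t$, or when $|Y'|$ is large. More precisely, I would split according to whether $Y' > 0$ or $Y' \leq 0$: on $\{Y' \leq 0\}$ we have $\{X' \leq t\} \subseteq \{X'+Y' \leq t\}$, so the contribution to $\P(X'+Y'\leq t) - \P(X'\leq t)$ from this region is nonnegative and bounded by $\P(t < X' \leq t - Y')$; symmetrically on $\{Y' > 0\}$ the difference is bounded in absolute value by $\P(t - Y' < X' \leq t)$. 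In both cases, further split according to whether $|Y'| \leq \epsilon$ or $|Y'| > \epsilon$. On $\{|Y'| \leq \epsilon\}$, the relevant probability is at most $\max\{\P(X' \in (t-\epsilon,t]), \P(X' \in (t,t+\epsilon])\}$; on $\{|Y'| > \epsilon\}$ it is at most $\P(|Y'| > \epsilon)$, which by Chebyshev's inequality (using $\mean[Y']=0$) is at most $\Var[Y']/\epsilon^2$.

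Assembling these pieces gives, for each fixed $\epsilon$,
\begin{align*}
    \big| \P(X'+Y' \leq t) - \P(X' \leq t) \big|
    \;\leq\;
    \max\big\{ \P(X' \in (t-\epsilon,t]), \, \P(X' \in (t,t+\epsilon]) \big\}
    + \mfrac{\Var[Y']}{\epsilon^2}\;,
\end{align*}
and taking the infimum over $\epsilon > 0$ yields the first claim. One subtlety to handle carefully is the bookkeeping of half-open intervals and the direction of the inequalities when $Y'$ changes sign, so that the two one-sided bounds are genuinely controlled by the same maximum; I would write $\{X'+Y'\leq t\}\triangle\{X'\leq t\}$ explicitly as a disjoint union over the sign of $Y'$ to make this airtight. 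The use of $\mean[Y']=0$ enters only through Chebyshev, so no higher moments are needed.

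For the second inequality, I would simply apply the first inequality with $X'$ replaced by $\sigma_{X'}^{-1} X'$ and $Y'$ replaced by $\sigma_{X'}^{-1} Y'$. This is legitimate because $\mean[\sigma_{X'}^{-1} Y'] = \sigma_{X'}^{-1}\mean[Y'] = 0$, and $\sigma_{X'}^{-1}(X'+Y')$ is exactly the sum of these two rescaled variables. The error term becomes $\Var[\sigma_{X'}^{-1}Y']/\epsilon^2 = \Var[Y']/(\Var[X']\,\epsilon^2)$, and the interval probabilities are stated in terms of the rescaled variable $\sigma_{X'}^{-1}X'$, which matches the claimed form verbatim.

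I do not expect any serious obstacle here: the whole argument is a robust and elementary coupling-plus-Chebyshev estimate. The only place demanding care is the case analysis on the sign of $Y'$ together with the interplay of strict versus non-strict inequalities in the interval endpoints — getting this exactly right is what makes the single $\max$ (rather than a sum of two interval probabilities) appear on the right-hand side. Everything else is a one-line application of Chebyshev and a change of variables.
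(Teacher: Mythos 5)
Your proposal is correct and follows essentially the same route as the paper: bound $|\P(X'+Y'\leq t)-\P(X'\leq t)|$ by allowing an $\epsilon$-buffer around $t$ at the cost of $\P(|Y'|>\epsilon)$, control the latter by Chebyshev (using $\mean[Y']=0$), take the infimum over $\epsilon$, and obtain the second bound by rescaling. The only difference is that the paper outsources the $\epsilon$-buffer step to a cited lemma (Lemma~25 of \cite{huang2024gaussian}), whereas you prove it inline by decomposing the symmetric difference $\{X'+Y'\leq t\}\,\triangle\,\{X'\leq t\}$ according to the sign of $Y'$; your bookkeeping of the half-open intervals and the resulting single $\max$ is exactly right.
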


\begin{remark}  Several adaptations are useful: (i)  To swap the roles of $X'+Y'$ and $X'$, one may replace $X'$ and $Y'$ above by $X'+Y'$ and $-Y'$ respectively; (ii) To replace $\sigma_{X'}$ by another normalisation, e.g.~$\sqrt{\Var[X'+Y']}$, one may rescale $t$ and $\epsilon$ simultaneously.
\end{remark}

\cref{prop:VD} formalises the variance domination effect: If $\Var[Y']/\Var[X'] = o(1)$, by choosing $\epsilon=(\Var[Y']/\Var[X'])^{1/3}$, \cref{prop:VD} implies that the c.d.f.~difference $\big| \P(  \sigma^{-1}_{X'}  (X'+Y') \leq t  )  -  \P( \sigma^{-1}_{X'} X' \leq t ) \big| = o(1)$. Meanwhile, to get a finite-sample control, one needs an anti-concentration bound on $\sigma_{X'}^{-1} X$. By a triangle inequality, it also suffices if $\sigma_{X'}^{-1} X$ is well-approximated in distribution by some random variable $Z'$ with an anti-concentration bound. For polynomials of a random vector following a log-concave probability measure, a celebrated anti-concentration result is available due to \cite{carbery2001distributional}. This in particular applies to polynomials of Gaussian random vectors.

\begin{fact}[Carbery-Wright inequality, Theorem 8 of \cite{carbery2001distributional}] \label{lem:carbery:wright} Let $q_m(\bx)$ be a degree-$m$ polynomial of $\bx \in \R^d$ taking values in $\R$, and $\eta$ be an $\R^d$-valued random vector following a log-concave probability measure. Then there exists a constant $C$ independent of $q_m$, $d$, $m$ or $\eta$ such that, for every $\epsilon > 0$,
    \begin{align*}
        \P \big( | q_m(\eta) | \leq \epsilon \big) 
        \;\leq\; 
        C  m \, \epsilon^{1/m} (\mean[|q_m(\eta)|^2])^{-1/{2m}}
        \;.
    \end{align*}
\end{fact}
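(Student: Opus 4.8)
The plan is to follow the argument of \cite{carbery2001distributional}. Rescaling $q_m$ by $(\mean[|q_m(\eta)|^2])^{1/2}$, one reduces to showing $\P(|q_m(\eta)| \le \epsilon) \le C m \, \epsilon^{1/m}$ for every degree-$m$ polynomial $q_m$ on $\R^d$ with $\mean[|q_m(\eta)|^2] = 1$ and every log-concave law of $\eta$. The first substantive step is to pass from $\R^d$ to $\R$: once cast (in a suitable homogeneous form) as an inequality $\int F \, d\mu \le 0$ required to hold for all log-concave probability measures $\mu$ on $\R^d$, the claim is of the type handled by the localisation lemma of Lov\'asz and Simonovits, so it suffices to verify it for the one-dimensional measures the lemma produces --- those supported on a segment with density a power of a positive affine function. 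Restricted to such a segment, $q_m$ is again a polynomial in one variable of degree at most $m$, so the problem becomes one-dimensional.

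In one variable, with $\nu$ such a measure and $q_m$ a degree-$\le m$ polynomial normalised by $\int q_m^2 \, d\nu = 1$, I would bound $\nu(\{|q_m| \le \epsilon\})$ as follows. Being a sublevel set of the degree-$2m$ polynomial $q_m^2$, the set $\{|q_m| \le \epsilon\}$ is a union of at most $m$ intervals. On any fixed interval of the support, a Remez/Chebyshev-type inequality bounds the Lebesgue measure of the subset on which $|q_m|$ drops below $\epsilon$ times its supremum over that interval by $O(\epsilon^{1/m})$ times the interval's length; combined with $\int q_m^2 \, d\nu = 1$, which prevents $|q_m|$ from being uniformly small on a $\nu$-non-negligible part of the support, this yields $\nu(\{|q_m| \le \epsilon\}) \le C m \, \epsilon^{1/m}$. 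Here the factor $m$ is the number of intervals and the exponent $1/m$ reflects the degree.

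The hard part is the one-dimensional estimate together with the exact tracking of constants --- extracting the sharp $\epsilon^{1/m}$ rate and the merely linear dependence on $m$, which is where the number of sign changes of a degree-$m$ polynomial and the Remez-type inequality are essential --- and putting the $\R^d$ statement into precisely the form the localisation lemma requires. Since this is Theorem 8 of \cite{carbery2001distributional}, we do not carry out these steps but cite the result directly.
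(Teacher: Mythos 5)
The paper states this as a \textbf{Fact} imported directly from Theorem~8 of \cite{carbery2001distributional} with no accompanying proof; since you likewise conclude by citing the result rather than deriving it, your approach matches the paper's. One small remark on the sketch you include: the route via the Lov\'asz--Simonovits localisation lemma together with a one-dimensional Remez/Chebyshev estimate is a standard and correct way to obtain such anti-concentration bounds, but it is associated more with the later treatment of Nazarov, Sodin and Volberg than with the original argument in \cite{carbery2001distributional}; this is immaterial here, since the paper requires no proof of the Fact, but the attribution of that particular argument to \cite{carbery2001distributional} is slightly off.
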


This immediately implies the following corollary of \cref{prop:VD}:

\begin{corollary} \label{cor:VD} Let $X'$, $Y'$ and $\sigma_{X'} > 0$ be defined as in \cref{prop:VD}, and $q_m(\eta)$ be given as in \cref{lem:carbery:wright}. Suppose $\Var[q_m(\eta)] = \Var\big[ \sigma^{-1}_{X'} X' \big] = 1$. Then there is an absolute constant $C>0$ such that for every $t \in \R$,
\begin{align*}
    \msup_{t \in \R}
    \big| 
    \P\big( X'+Y' & \leq t \big) - \P\big(  X' \leq t \big)
    \big|
    \\
    &
    \;\leq\;
    C m 
    \Big(\mfrac{\Var[Y']}{\Var[X']}\Big)^{\frac{1}{2m+1}}
    +
    2 \msup_{t \in \R} \big| 
        \P\big( \sigma^{-1}_{X'} X' \leq t \big) 
        - 
        \P\big( q_m(\eta) \leq t \big) \big|
    \;.
\end{align*}
\end{corollary}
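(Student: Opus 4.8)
The plan is to combine Proposition~\ref{prop:VD} with the Carbery--Wright anti-concentration bound of Fact~\ref{lem:carbery:wright}, and then absorb the switch from $\sigma_{X'}^{-1}X'$ to $q_m(\eta)$ via a triangle inequality. First I would invoke the second display of Proposition~\ref{prop:VD}: for every $t \in \R$,
\begin{align*}
    \big| \P\big( \sigma^{-1}_{X'}(X'+Y') \leq t \big) - \P\big( \sigma^{-1}_{X'} X' \leq t \big) \big|
    \;\leq\;
    \inf_{\epsilon > 0}
    \Big(
    \max\big\{ \P( \sigma^{-1}_{X'} X' \in (t-\epsilon,t]),\, \P( \sigma^{-1}_{X'} X' \in (t,t+\epsilon]) \big\}
    + \mfrac{\Var[Y']}{\Var[X']\,\epsilon^2}
    \Big)\;.
\end{align*}
Note that the left-hand side is unchanged if we replace $X'+Y'$ and $X'$ by the same quantities, since both c.d.f.'s are taken at the same $t$; and the supremum over $t$ on the right is what we will bound uniformly.

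Next I would control the $\max$ term. Each probability $\P(\sigma_{X'}^{-1}X' \in (t-\epsilon,t])$ is at most $\sup_{s}\P(\sigma_{X'}^{-1}X' \in (s-\epsilon,s])$, which by a triangle inequality is at most $\sup_s \P(q_m(\eta) \in (s-\epsilon,s]) + 2\sup_{t}|\P(\sigma_{X'}^{-1}X' \leq t) - \P(q_m(\eta)\leq t)|$; the factor $2$ comes from writing an interval probability as a difference of two c.d.f.\ values. For the first term, since $q_m(\eta) - s$ is itself a degree-$m$ polynomial in $\eta$ evaluated at a log-concave vector, Fact~\ref{lem:carbery:wright} gives $\P(|q_m(\eta)-s| \leq \epsilon) \leq Cm\,\epsilon^{1/m}(\mean[|q_m(\eta)-s|^2])^{-1/2m}$. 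Here I would use $\mean[|q_m(\eta)-s|^2] \geq \Var[q_m(\eta)] = 1$ (the variance is the infimum of $\mean[|q_m(\eta)-s|^2]$ over $s$), so that $\P(q_m(\eta)\in(s-\epsilon,s]) \leq Cm\,\epsilon^{1/m}$ uniformly in $s$. Plugging back, and taking $\sup_t$, the bound becomes
\begin{align*}
    \msup_{t}\big| \P\big( \sigma^{-1}_{X'}(X'+Y') \leq t \big) - \P\big( \sigma^{-1}_{X'} X' \leq t \big) \big|
    \;\leq\;
    Cm\,\epsilon^{1/m} + \mfrac{\Var[Y']}{\Var[X']\,\epsilon^2} + 2\msup_t\big|\P(\sigma_{X'}^{-1}X'\leq t) - \P(q_m(\eta)\leq t)\big|
\end{align*}
for every $\epsilon>0$.

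Finally I would optimise over $\epsilon$: balancing $\epsilon^{1/m}$ against $(\Var[Y']/\Var[X'])\epsilon^{-2}$ suggests $\epsilon = (\Var[Y']/\Var[X'])^{m/(2m+1)}$, which makes both terms of order $(\Var[Y']/\Var[X'])^{1/(2m+1)}$, yielding the claimed $Cm(\Var[Y']/\Var[X'])^{1/(2m+1)}$ (after renaming the constant). The last step is to pass from the left-hand side above, which has the $\sigma_{X'}^{-1}$ normalisation inside both probabilities, to the unnormalised statement $\P(X'+Y'\leq t)$ versus $\P(X'\leq t)$ in the corollary: this is just the substitution $t \mapsto \sigma_{X'}^{-1}t$ under the supremum, which leaves $\sup_t$ invariant. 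I do not expect a genuine obstacle here; the only point requiring care is the uniform-in-$s$ application of Carbery--Wright to the shifted polynomial and the bookkeeping of the factor $2$ when converting interval probabilities into c.d.f.\ differences, together with checking that the $\epsilon$ that optimises the bound is admissible (positive), which it is whenever $\Var[Y']>0$; the degenerate case $\Var[Y']=0$ is trivial.
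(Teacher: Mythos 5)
Your proposal is correct and matches the paper's own argument: both rely on the second display of Proposition~\ref{prop:VD}, apply the Carbery--Wright bound (Fact~\ref{lem:carbery:wright}) to control the interval probability after passing to $q_m(\eta)$ via a triangle inequality (paying the factor $2$ on the sup-norm c.d.f.\ error), use $\mean[|q_m(\eta)-s|^2]\geq\Var[q_m(\eta)]=1$, and choose $\epsilon=(\Var[Y']/\Var[X'])^{m/(2m+1)}$. The only difference is that you make explicit the shift $q_m(\eta)\mapsto q_m(\eta)-s$ needed for the uniform-in-$t$ anti-concentration estimate, a detail the paper's one-line proof leaves implicit.
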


\begin{proof}[Proof of \cref{cor:VD}] The result follows from combining the second statement of \cref{prop:VD} and \cref{lem:carbery:wright}, noting that $\mean[|q_m(\eta)|^2] \geq \Var[q_m(\eta)] = 1$, choosing $\epsilon = (\Var[Y'] / \Var[X'])^{m/(2m+1)}$ and taking a supremum over $t \in \R$.
\end{proof}

\begin{remark} The generality of \cref{prop:VD} comes at a cost: It does not provide the tightest control on the c.d.f.~difference in general, as the variance ratio term comes from the Markov's inequality. When one has more information about the tail behaviour of $Y'/\sqrt{\Var[X']}$, the bound can usually be improved.
\end{remark}

\section{Proofs} \label{sec:proofs}

Throughout this section, we denote the collection of Gaussian vectors
\begin{align*}
    Z \;=&\; (Z_1, \ldots, Z_n)
    &\text{ where }&&
    Z_i \;\overset{\rm i.i.d.}{\sim}&\;
    \cN(\mean[Y_{1;\sigma_n}], \Var[Y_{1;\sigma_n}])
    \;=\;
    \cN\Big( \begin{psmallmatrix}0 \\ 0 \end{psmallmatrix}, \begin{psmallmatrix} \sigma^2_n & 0 \\ 0 & 1 \end{psmallmatrix}\Big)
    \;.
\end{align*}
We state three intermediate results used in the proof of \cref{thm:main}. The first two results concerns the Gaussian universality approximations of $\tilde v_n(Y)$ and $\tilde u_n(Y)$. We improve the upper bound of \cite{huang2024gaussian}, $n^{-\frac{\nu-2}{4\nu+2}}$, by using an argument specific to the construction instead of the generic Lindeberg's technique used in \cite{huang2024gaussian}. Note that the $\sigma_0$-dependence below arises because $Y$ and $Z$ are implicitly parameterised by $\sigma_0$.

\begin{lemma} \label{lem:universality:v} Fix $\nu \in (2,3]$. Then there exist some absolute constants ${c, C, \sigma_0 > 0}$ and ${N \in \N}$, such that for all $n \geq N$,
    \begin{align*}
        c n^{-\frac{\nu-2}{4\nu }} 
        \;\leq&\; 
        \msup_{t \in \R} \big| \P( n \, \tilde v_n(Y) \leq t) - \P( n \, \tilde v_n(Z) \leq t) \big|
        \;\leq\;
        C n^{-\frac{\nu-2}{4\nu }} \;.
    \end{align*}
\end{lemma}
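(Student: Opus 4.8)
The plan is to work directly with $p^*_n$ through the identity $n\tilde v_n(y)=p^*_n(y)$ recorded in \cref{sec:construction}, and to bound $\sup_{t\in\R}\big|\P(p^*_n(Y)\le t)-\P(p^*_n(Z)\le t)\big|$ from both sides. The starting point is that each of these two statistics decomposes as $W+Q$, with $W$ independent of $Q$, $\mean[W]=0$, $\Var[W]=\sigma_n^2$, and $Q\sim\chi^2_1$ \emph{exactly}. For $Y$, set $W_Y:=n^{-1/2}\sum_i Y_{i1;\sigma_n}=(2n)^{-1/2}\sum_i U_{i;\sigma_n}+\sigma_n(2n)^{-1/2}\sum_i\xi_{i1}$ and $Q_Y:=\big(n^{-1/2}\sum_i\xi_{i2}\big)^2$; the form of $Y_{i;\sigma_n}$ makes $W_Y$ independent of $Q_Y$, a one-line computation gives $\mean[U_{\sigma_n}]=0$ and $\mean[U_{\sigma_n}^2]=\sigma_n^2$, hence $\Var[W_Y]=\tfrac12\sigma_n^2+\tfrac12\sigma_n^2=\sigma_n^2$, and $Q_Y\sim\chi^2_1$. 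For $Z$ the decomposition is immediate, with $W_Z:=n^{-1/2}\sum_i Z_{i1}\sim\cN(0,\sigma_n^2)$ and $Q_Z:=\big(n^{-1/2}\sum_i Z_{i2}\big)^2\sim\chi^2_1$ independent. Writing $F$ for the $\chi^2_1$ c.d.f., conditioning on $W$ gives $\P(W+Q\le t)=\mean[F(t-W)]$ in both cases.

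For the upper bound I would use that $F$ is $\tfrac12$-H\"older on $\R$: since $F'(x)=(2\pi x)^{-1/2}e^{-x/2}\ind_{\{x>0\}}$, integration gives $|F(a)-F(b)|\le\sqrt{2|a-b|/\pi}$ for all $a,b\in\R$. Then for every $t$,
\begin{align*}
  \big|\P(W_Y+Q_Y\le t)-\P(W_Z+Q_Z\le t)\big|
  &\le\mean\big|F(t-W_Y)-F(t)\big|+\mean\big|F(t-W_Z)-F(t)\big|\\
  &\le\sqrt{\tfrac2\pi}\,\big(\mean\sqrt{|W_Y|}+\mean\sqrt{|W_Z|}\big),
\end{align*}
and two applications of Jensen's inequality give $\mean\sqrt{|W|}\le\sqrt{\mean|W|}\le(\Var[W])^{1/4}=\sqrt{\sigma_n}$ for $W\in\{W_Y,W_Z\}$. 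Since $\sigma_n\le\sigma_0 n^{-(\nu-2)/(2\nu)}$, this yields the right-hand inequality with $C=2\sqrt{2\sigma_0/\pi}$, for all $n$. This is the ``argument specific to the construction'' that replaces Lindeberg's technique: $p^*_n(Y)$ and $p^*_n(Z)$ carry the \emph{same} $\chi^2_1$ component and differ only through a centred additive term of variance $\sigma_n^2$, so the entire discrepancy is localised at the unique non-Lipschitz point $t=0$ of $F$, where $F$ grows like $\sqrt{\,\cdot\,}$. This localisation is what lets us improve the generic bound $n^{-(\nu-2)/(4\nu+2)}$ of \cite{huang2024gaussian} to $n^{-(\nu-2)/(4\nu)}$.

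For the lower bound I would invoke the slow universality rate of the polynomial $p^*_m$ from \cite{huang2024gaussian}. Specialised to $m=2$, to the heavy-tailed law $U_{\sigma_n}$, and to the Gaussian reference $Z$ of \cref{sec:construction}, it provides an absolute $\sigma_0>0$, a constant $c>0$ and $N\in\N$ such that $\sup_{t\in\R}\big|\P(p^*_2(Y)\le t)-\P(p^*_2(Z)\le t)\big|\ge c\,n^{-(\nu-2)/(2\nu\cdot2)}$ for all $n\ge N$; since $p^*_2=p^*_n=n\tilde v_n$ this is exactly the left-hand inequality. Informally the rate is slow because $W_Y$ carries, with constant probability, an atom at distance of order $\sigma_n$ from the origin, and the $\tfrac12$-H\"older singularity of $F$ at $0$ cannot smooth such a perturbation below scale $\sqrt{\sigma_n}=\Theta\big(n^{-(\nu-2)/(4\nu)}\big)$ --- the same scale as the upper bound.

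The only real bookkeeping is the structural decomposition of the first paragraph, in particular the identity $\mean[U_{\sigma_n}^2]=\sigma_n^2$, which pins $\Var[W_Y]=\sigma_n^2$ and so fixes the rate at $\sqrt{\sigma_n}$; one also has to check that the $\sigma_0$ delivered by \cite{huang2024gaussian} is compatible with the clipped sequence $\sigma_n=\min\{\sigma_0 n^{-(\nu-2)/(2\nu)},1\}$ used here (for $n$ large the clip is inactive and the probabilities defining $U_{\sigma_n}$ are valid, which fixes $N$). I do not anticipate a genuine obstacle; the content of the lemma is really the recognition, on the upper-bound side, that the sole mechanism of slow convergence is the $\tfrac12$-H\"older singularity of $F_{\chi^2_1}$ at the origin, so that no Lindeberg-type swapping is needed at all.
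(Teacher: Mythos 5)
Your lower bound is the same as the paper's: both invoke the slow-universality rate for $p^*_n$ established in \cite{huang2024gaussian} (Theorem 2 there), using the identity $n\tilde v_n = p^*_n$. Your upper bound, however, takes a genuinely different and considerably more elementary route. The paper bounds $\sup_t\big|\P(S_n+\chi^2_1\le t)-\P(\sigma_n\xi+\chi^2_1\le t)\big|$ by invoking two refined CLT/Edgeworth-type estimates for $|\P(S_n<x)-\P(\sigma_n\xi<x)|$ (Lemmas 19 and 20 of \cite{huang2024gaussian}), splitting the domain at $|t-y^2|\lessgtr\sigma_n$, integrating against the Gaussian density of $y$, and handling the resulting tail integrals $I_1,I_2$ after a change of variable $y\mapsto\sqrt{\sigma_n}\,y$; this argument \emph{does} exploit that $S_n$ is distributionally close to $\sigma_n\xi$. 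Your argument bypasses all of that: you note that $p^*_n(Y)$ and $p^*_n(Z)$ share the \emph{same} $\chi^2_1$ component $Q$ independent of a centred perturbation $W\in\{W_Y,W_Z\}$ with $\Var[W]=\sigma_n^2$, condition on $W$ to write $\P(W+Q\le t)=\mean[F_{\chi^2_1}(t-W)]$, and then use only the $\tfrac12$-H\"older continuity $|F_{\chi^2_1}(a)-F_{\chi^2_1}(b)|\le\sqrt{2|a-b|/\pi}$ together with Jensen's inequality to get $\mean|F_{\chi^2_1}(t-W)-F_{\chi^2_1}(t)|\le\sqrt{2/\pi}\,(\mean W^2)^{1/4}=\sqrt{2\sigma_n/\pi}$. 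Triangling through $F_{\chi^2_1}(t)$ gives $\sup_t|\cdot|\le 2\sqrt{2\sigma_n/\pi}\lesssim n^{-(\nu-2)/(4\nu)}$, uniformly in $t$. The numerical facts you rely on check out: $\mean[U_{\sigma_n}]=0$, $\mean[U_{\sigma_n}^2]=\sigma_n^2$, hence $\Var[W_Y]=\sigma_n^2$, and the independence structure is as claimed. One thing worth making explicit if you write this up: your upper bound does not actually \emph{use} the universality between $Y$ and $Z$ at all --- it shows that both $\P(p^*_n(Y)\le t)$ and $\P(p^*_n(Z)\le t)$ are within $O(\sqrt{\sigma_n})$ of the fixed $\chi^2_1$ c.d.f., which is weaker-sounding but sufficient here because the matching lower bound is also $\Theta(\sqrt{\sigma_n})$. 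By contrast the paper's integral argument tracks the actual $Y$-vs-$Z$ discrepancy, which is logically tighter away from $t=0$; since the supremum is achieved near the $\tfrac12$-H\"older singularity at $t=0$, both give the same rate. Your version is shorter and makes the mechanism of slow convergence (the singularity of $F_{\chi^2_1}$ at the origin) transparent; you are right that the only bookkeeping point is that $\sigma_0$ must be the one delivered by the lower bound, and $N$ large enough that $3\sigma_n^{2\nu/(\nu-2)}\le 1$ so $U_{\sigma_n}$ is well-defined.
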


\begin{lemma} \label{lem:universality:u} Fix $\nu \in (2,3]$ and let $\sigma_0$ be given as in \cref{lem:universality:v}. Then there exist some absolute constants ${c', C' > 0}$ and ${N' \in \N}$, such that for all $n \geq N'$,
    \begin{align*}
        c' n^{-\frac{\nu-2}{4\nu }} 
        \;\leq&\; 
        \sup_{t \in \R} \Big| \P\Big( \sqrt{n(n-1)} \, \tilde u_n(Y) \leq t\Big) - \P\Big( \sqrt{n(n-1)} \, \tilde u_n(Z) \leq t\Big) \Big|
        \;\leq\;
        C' n^{-\frac{\nu-2}{4\nu }} \;.
    \end{align*}
\end{lemma}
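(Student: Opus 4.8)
The plan is to reduce \cref{lem:universality:u} to \cref{lem:universality:v} using variance domination, by exploiting the decomposition $p^*_n = \sqrt{n(n-1)}\,\tilde u_n + R_n$ already recorded in \cref{sec:construction}. Applying the same identity to $Z$, we have $\sqrt{n(n-1)}\,\tilde u_n(Y) = p^*_n(Y) - R_n(Y)$ and $\sqrt{n(n-1)}\,\tilde u_n(Z) = p^*_n(Z) - R_n(Z)$, where $R_n(y) = \tfrac1n\sum_i y_{i2}^2$. Since $p^*_n(Y) = n\,\tilde v_n(Y)$ and likewise for $Z$, \cref{lem:universality:v} already controls $\sup_t|\P(p^*_n(Y)\le t) - \P(p^*_n(Z)\le t)|$ at the rate $n^{-\frac{\nu-2}{4\nu}}$ both above and below. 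So the task is to argue that subtracting $R_n$ does not change the rate.

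\emph{Upper bound.} First I would note that $R_n(Y)$ and $R_n(Z)$ have identical distributions: both equal $\tfrac1n\sum_i \xi_{i2}^2$ since the second coordinates of $Y_{i;\sigma_n}$ and $Z_i$ are both standard Gaussian. By the law of large numbers $R_n(Y) \parrow 1$; more quantitatively $\Var[R_n(Y)] = 2/n$. The idea is to apply \cref{prop:VD} (in the form of the first remark, swapping the roles) with $X' = p^*_n(Y)$ and $Y' = -(R_n(Y) - 1)$, so that $X' + Y' = \sqrt{n(n-1)}\,\tilde u_n(Y) + 1$, a shift of the quantity we care about. This gives
\begin{align*}
    \big| \P(\sqrt{n(n-1)}\,\tilde u_n(Y) \le t) - \P(p^*_n(Y) - 1 \le t) \big|
    \;\leq\;
    \inf_{\epsilon>0}\Big( \sup_{s}\P(p^*_n(Y) \in (s, s+\epsilon]) + \tfrac{2/n}{\epsilon^2}\Big)\;.
\end{align*}
To bound the anti-concentration term I would pass to $p^*_n(Z) = n\,\tilde v_n(Z)$ using \cref{lem:universality:v} (at cost $O(n^{-\frac{\nu-2}{4\nu}})$) and then apply the Carbery--Wright inequality (\cref{lem:carbery:wright}) to the degree-two polynomial $p^*_n(Z)$ evaluated at the Gaussian vector $Z$ (after normalising by $\sqrt{\Var[p^*_n(Z)]}$, which is bounded above and below by absolute constants for $n$ large since $\sigma_n \le 1$): this yields $\sup_s \P(p^*_n(Z)\in(s,s+\epsilon]) \le C\sqrt{\epsilon}$. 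Choosing $\epsilon \asymp n^{-2/5}$ balances $\sqrt\epsilon$ against $n^{-1}\epsilon^{-2}$ to give $O(n^{-1/5})$, which is $o(n^{-\frac{\nu-2}{4\nu}})$ uniformly over $\nu\in(2,3]$ since $\frac{\nu-2}{4\nu}\le\frac1{12}<\frac15$. The same argument applied to $Z$ controls $|\P(\sqrt{n(n-1)}\,\tilde u_n(Z)\le t) - \P(p^*_n(Z)-1\le t)|$. Combining these two via the triangle inequality with \cref{lem:universality:v} (shifted by $1$, which does not affect the sup) gives the upper bound $C' n^{-\frac{\nu-2}{4\nu}}$.

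\emph{Lower bound.} Here I would argue that the $R_n$ perturbation is too small to destroy the lower bound. From the triangle inequality,
\begin{align*}
    \sup_t\big|\P(\sqrt{n(n-1)}\tilde u_n(Y)\le t) - \P(\sqrt{n(n-1)}\tilde u_n(Z)\le t)\big|
    \;\geq\;
    &\sup_t\big|\P(p^*_n(Y)-1\le t) - \P(p^*_n(Z)-1\le t)\big| \\
    &- 2\,(\text{error from replacing }R_n\text{ by }1),
\end{align*}
and the first term on the right is exactly the quantity from \cref{lem:universality:v} (the shift by $1$ is immaterial under $\sup_t$), which is $\ge c\,n^{-\frac{\nu-2}{4\nu}}$, while the correction term is $O(n^{-1/5})$ by the upper-bound analysis above. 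For $n$ large enough the $c\,n^{-\frac{\nu-2}{4\nu}}$ dominates, giving $c' n^{-\frac{\nu-2}{4\nu}}$ with, say, $c' = c/2$.

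\emph{Main obstacle.} The delicate point is making the Carbery--Wright step legitimate: one must verify that $\Var[p^*_n(Z)]$ (equivalently $\Var[n\,\tilde v_n(Z)]$) is bounded away from $0$ and $\infty$ by absolute constants, uniformly in $n$ (for $n\ge$ some $N'$) and in $\sigma_0$ through $\sigma_n\le1$ — this is where the normalisation in \cref{cor:VD}/\cref{lem:carbery:wright} bites. A direct computation gives $\Var[n\,\tilde v_n(Z)] = \sigma_n^2\cdot\tfrac{n-1}{n}\cdot\tfrac14 \cdot(\text{something}) + 2\cdot\tfrac{(n-1)}{n}\cdot(\cdots)$ — the leading term coming from $(\tfrac1{\sqrt n}\sum \xi_{i2})^2$ contributes a fixed $O(1)$ variance, so this is fine, but it needs to be written out carefully. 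A secondary subtlety: to apply the first remark after \cref{prop:VD} one needs $\mean[R_n(Y)-1] = 0$ exactly, which holds. Everything else is bookkeeping with the triangle inequality and the already-available \cref{lem:universality:v}.
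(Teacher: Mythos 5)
Your upper-bound argument is essentially the one the paper gives: decompose $\sqrt{n(n-1)}\,\tilde u_n = (p^*_n-1) - (R_n-1)$, compute $\Var[R_n]=2/n$ and $\sigma_*^2 := \Var[p^*_n(Z)] \ge 2$, apply variance domination (\cref{cor:VD}) in both the $Y$ and $Z$ coordinates, and combine with \cref{lem:universality:v} by the triangle inequality. Your ``main obstacle'' worry is actually harmless: only the \emph{lower} bound $\sigma_*^2\ge 2$ is used (the anti-concentration bound and the variance-ratio term both only need this), so you do not need an upper bound on $\Var[p^*_n(Z)]$.

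The lower bound, however, has a genuine gap. You claim that the ``correction term'' $\sup_t\bigl|\P(\sqrt{n(n-1)}\,\tilde u_n(Y)\le t) - \P(p^*_n(Y)-1\le t)\bigr|$ is $O(n^{-1/5})$, but this is not what your own upper-bound analysis delivers. To apply Carbery--Wright you must first pass from $p^*_n(Y)$ to $p^*_n(Z)$ via \cref{lem:universality:v}, which costs $O(n^{-\frac{\nu-2}{4\nu}})$ --- the \emph{same order} as the target rate. Concretely, \cref{cor:VD} gives $\sup_t|A_t-A'_t| \le \tilde C\,n^{-1/5} + 2C\,n^{-\frac{\nu-2}{4\nu}}$, where $C$ is the upper-bound constant of \cref{lem:universality:v}. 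Your triangle inequality then produces
$c\,n^{-\frac{\nu-2}{4\nu}} - 2C\,n^{-\frac{\nu-2}{4\nu}} - O(n^{-1/5})$,
and there is no reason to expect $c>2C$, so the bound may be negative. The paper avoids this by arguing \emph{pointwise}: it evaluates the c.d.f.\ difference at the specific threshold $t = -2\sigma_n-\sigma_*\epsilon-1$, invokes Lemma~21 of \cite{huang2024gaussian} for the pointwise lower bound $\P(p^*_n(Z)<-2\sigma_n)-\P(p^*_n(Y)<-2\sigma_n) \ge c_* n^{-\frac{\nu-2}{4\nu}}$, and uses the one-sided inequalities of \cref{lem:approx:XplusY:by:Y} to telescope. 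The crucial structural feature of that chain is that it only ever requires anti-concentration of $p^*_n(Z)$ (where Carbery--Wright applies directly, with no universality cost), while the $R_n$-perturbations for both $Y$ and $Z$ are controlled by Markov alone. The $Y$-side never needs an anti-concentration bound, so the $O(n^{-\frac{\nu-2}{4\nu}})$ loss you implicitly incur never appears. To fix your proof you would need to replace the sup-norm triangle inequality by this pointwise telescoping.
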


Now observe that the universality approximations can be expressed as
\begin{align*}
    n \, \tilde v_n(Z) 
    \;=&\; 
    \mfrac{1}{\sqrt{n}}
    \msum_{i=1}^n 
    Z_{i1} 
    +
    \mfrac{1}{n} \msum_{i,j=1}^n Z_{i2} Z_{j2} 
    \;\overset{d}{=}\; 
    \sigma_n \xi + \chi^2_1
    \;,
    \\
    \sqrt{n(n-1)} 
    \, \tilde u_n(Z)
    \;=&\;
    \mfrac{1}{\sqrt{n}} 
    \msum_{i=1}^n 
    Z_{i1}
    +
    \mfrac{1}{n} 
    \msum_{i \neq j} 
    Z_{i2} Z_{j2}
    \;\overset{d}{=}\; 
    \sigma_n \xi 
    + 
    \mfrac{1}{n} 
    \msum_{i \neq j} 
    Z_{i2} Z_{j2}
    \;.
\end{align*}
The next result allows the quadratic part of $\sqrt{n(n-1)} 
\, \tilde u_n(Z)$ to be approximated by the centred chi-squared variable $\overline{\chi^2_1}$ plus a small Gaussian component $\sigma_n \xi$.

\begin{lemma} \label{lem:chi:square} There exists some absolute constant $C'' > 0$ such that for all $n \in \N$,  
    \begin{align*}
        \msup_{t \in \R}
        \Big| 
            \P\Big(  \sqrt{n(n-1)}\, \tilde u_n(Z)  \leq t \Big) 
            -
            \P\Big( \, \sigma_n \xi + \overline{\chi^2_1} \leq t \Big)
        \Big|
        \;\leq&\;
        C'' n^{-1/5}
        \;.
    \end{align*}
\end{lemma}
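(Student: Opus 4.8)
\textbf{Proof proposal for \cref{lem:chi:square}.} The plan is to split $\sqrt{n(n-1)}\,\tilde u_n(Z)$ into a main term whose law is \emph{exactly} $\sigma_n\xi+\overline{\chi^2_1}$ and a residual of variance $O(1/n)$, and then feed this into the variance-domination bound of \cref{cor:VD}. By the identity displayed just before the lemma, $\sqrt{n(n-1)}\,\tilde u_n(Z)\overset{d}{=}\sigma_n\xi+\frac1n\sum_{i\neq j}Z_{i2}Z_{j2}$ with $\xi\sim\cN(0,1)$ independent of $(Z_{i2})_{i\le n}$. Writing $S_n\coloneqq\sum_{i=1}^n Z_{i2}$ and $T_n\coloneqq\sum_{i=1}^n Z_{i2}^2$, we have $\frac1n\sum_{i\neq j}Z_{i2}Z_{j2}=\frac{S_n^2}{n}-\frac{T_n}{n}$, and since $S_n/\sqrt n\sim\cN(0,1)$ the variable $S_n^2/n$ is \emph{exactly} $\chi^2_1$-distributed. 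I would therefore set
\[
X'\;\coloneqq\;\sigma_n\xi+\tfrac{S_n^2}{n}-1\;,\qquad Y'\;\coloneqq\;1-\tfrac{T_n}{n}\;,
\]
so that $X'+Y'\overset{d}{=}\sqrt{n(n-1)}\,\tilde u_n(Z)$ while $X'\overset{d}{=}\sigma_n\xi+\overline{\chi^2_1}$, which is precisely the pair of laws compared in the lemma.

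Next I would record the elementary facts $\mean[Y']=1-n^{-1}\mean[T_n]=0$, $\Var[Y']=n^{-2}\Var[T_n]=n^{-1}\Var[Z_{12}^2]=2/n$, and $\Var[X']=\sigma_n^2+\Var[\chi^2_1]=\sigma_n^2+2\ge 2$. The key observation is that, after normalising by $\sigma_{X'}\coloneqq\sqrt{\sigma_n^2+2}$, the variable $X'$ is a degree-two polynomial $q_2(\eta)\coloneqq\sigma_{X'}^{-1}\!\big(\sigma_n\xi+n^{-1}(\sum_i Z_{i2})^2-1\big)$ of the Gaussian (hence log-concave) vector $\eta\coloneqq(\xi,Z_{12},\dots,Z_{n2})$, with $\Var[q_2(\eta)]=1$ by construction; so $\sigma_{X'}^{-1}X'$ and $q_2(\eta)$ are literally the same random variable and the Carbery–Wright estimate of \cref{lem:carbery:wright} applies to it with zero approximation error. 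Applying \cref{cor:VD} with $m=2$ then yields
\[
\msup_{t\in\R}\Big|\P\big(\sqrt{n(n-1)}\,\tilde u_n(Z)\le t\big)-\P\big(\sigma_n\xi+\overline{\chi^2_1}\le t\big)\Big|\;\le\;2C\Big(\tfrac{\Var[Y']}{\Var[X']}\Big)^{1/5}\;\le\;2C\,(1/n)^{1/5}\;=\;C''n^{-1/5}\;,
\]
with $C''=2C$ an absolute constant, as claimed.

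An equivalent route, bypassing \cref{cor:VD}, is to apply the first bound of \cref{prop:VD} directly together with the explicit anti-concentration estimate $\P(\chi^2_1\in I)\le\sqrt{2|I|/\pi}$ for any interval $I$ (integrate the density $(2\pi x)^{-1/2}e^{-x/2}$ and use $\sqrt{a+\epsilon}-\sqrt a\le\sqrt\epsilon$), which transfers to $X'$ by conditioning on the independent Gaussian part $\sigma_n\xi$, and then optimising $\inf_{\epsilon>0}\big(\sqrt{2\epsilon/\pi}+\tfrac{2/n}{\epsilon^2}\big)$ at $\epsilon\asymp n^{-2/5}$. I do not expect any serious obstacle: the only point requiring care is choosing the split so that $Y'$ has variance $O(1/n)$ \emph{while} the main term $X'$ remains a low-degree polynomial of a Gaussian, so that a quadratic anti-concentration bound (necessarily of order $\epsilon^{1/2}$) is available; the resulting exponent $n^{-1/5}$ is exactly the balance $(\text{variance ratio})^{1/(2m+1)}$ at $m=2$, consistent with the earlier remark that these variance-domination bounds are generally not tight.
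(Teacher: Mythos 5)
Your proof is correct and takes essentially the same route as the paper: decompose $\sqrt{n(n-1)}\,\tilde u_n(Z)$ as $\big(n\tilde v_n(Z)-1\big)-\big(R_n(Z)-1\big)$, identify $n\tilde v_n(Z)-1\overset{d}{=}\sigma_n\xi+\overline{\chi^2_1}$, observe $\Var[R_n(Z)]=2/n$ and $\Var[n\tilde v_n(Z)-1]\ge 2$, and apply \cref{cor:VD} with $m=2$, with the Carbery--Wright term vanishing because the normalised $X'$ is literally a degree-two Gaussian polynomial. The only difference is presentational: you spell out $X'$ directly as $\sigma_n\xi+S_n^2/n-1$ rather than as $n\tilde v_n(Z)-1$, and you additionally sketch a standalone chi-square anti-concentration alternative, but the core argument matches the paper's.
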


\subsection{Proof of \cref{thm:main}} Recall that by construction, $u_n(X) \overset{d}{=} \tilde u_n(Y)$, $v_n(X) \overset{d}{=} \tilde v_n(Y)$ and $n \tilde v_n(Z) \overset{d}{=} \sigma_n \xi + \chi^2_1$. The V-statistic bound then follows directly from \cref{lem:universality:v}: There exist some constants $\sigma_0, c_1, C_1 > 0$ and $N_1 \in \N$ such that for all $n \geq N_1$,
\begin{align*}
    c_1 n^{-\frac{\nu-2}{4\nu }} 
    \;\leq&\;
    \msup_{t \in \R} \Big| \P\big( n \, v_n(X) \leq t \big) - \P\big( \, \sigma_n \xi + \chi^2_1 \leq t \, \big) \Big|
    \;\leq\;
    C_1 n^{-\frac{\nu-2}{4\nu }}\;.
\end{align*}
By using the triangle inequality to combine the U-statistic bounds from \cref{lem:universality:u} and \cref{lem:chi:square}, there is another absolute constant $C_2 > 0$ such that  
\begin{align*}
    \sup_{t \in \R} 
    \Big| 
        \P\big( \sqrt{n(n-1)} \, u_n(X) \leq t \big) -
        \P\Big( \, \sigma_n \xi + \overline{\chi^2_1} \leq t \Big)
    \Big|
    \;\leq&\; 
    C_1 n^{-\frac{\nu-2}{4\nu}} + C_2 n^{-\frac{1}{5}}
    \;,
    \\
    \sup_{t \in \R} 
    \Big| 
        \P\big( \sqrt{n(n-1)} \, u_n(X) \leq t \big) -
        \P\Big( \, \sigma_n \xi + \overline{\chi^2_1} \leq t \Big)
    \Big|
    \;\geq&\; 
    c_1 n^{-\frac{\nu-2}{4\nu}} - C_2 n^{-\frac{1}{5}}\;.
\end{align*}
Since $\frac{\nu-2}{4\nu} \leq \frac{1}{8} < \frac{1}{5}$ for $\nu \in (2,3]$, the $n^{-1/5}$ term can be ignored when $n$ is large. In particular, there exist $c \in (0, c_1]$, $C > C_1$ and an integer $N \geq N_1$ such that for all $n \geq N$,
\begin{equation*}
    c n^{-\frac{\nu-2}{4\nu }} 
    \;\leq\;
    \msup_{t \in \R} \Big| \P\big( n \, v_n(X) \leq t \big) - \P\big( \, \sigma_n \xi + \chi^2_1 \leq t \, \big) \Big|
    \;\leq\;
    C n^{-\frac{\nu-2}{4\nu}}\;.
\end{equation*}
\qed

\subsection{Proof of \cref{prop:VD}} The proof relies on the following result: 

\begin{lemma}[Lemma 25 of \cite{huang2024gaussian}] \label{lem:approx:XplusY:by:Y} Let $X'$ and $Y'$ be two real-valued random variables. For any $a, b \in \R$ and $\epsilon > 0$, we have
    \begin{align*}
        \P( a \leq X'+Y' \leq b) 
        \;\leq\;
        \P( a - \epsilon \leq X' \leq b+\epsilon ) + \P(  |Y'| \geq \epsilon )\;,
        \\
        \P( a \leq X'+Y' \leq b) 
        \;\geq\;
        \P( a + \epsilon \leq X' \leq b-\epsilon ) - \P(  |Y'| \geq \epsilon )
        \;.
    \end{align*}
\end{lemma}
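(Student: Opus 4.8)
The plan is to prove both inequalities by conditioning on whether $|Y'|$ is small or large, and reducing each case to an elementary inclusion of events; the statement is cited as Lemma 25 of \cite{huang2024gaussian}, but it admits a short self-contained argument, which I sketch now. Throughout, fix $a, b \in \R$ and $\epsilon > 0$, and write $A_\epsilon \coloneqq \{|Y'| < \epsilon\}$, so that $\P(A_\epsilon^c) = \P(|Y'| \geq \epsilon)$.

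For the upper bound, I would observe that on the event $\{a \leq X'+Y' \leq b\} \cap A_\epsilon$ one has $X' = (X'+Y') - Y'$ with $X'+Y' \in [a,b]$ and $|Y'| < \epsilon$, hence $X' \in [a-\epsilon, b+\epsilon]$. Therefore $\{a \leq X'+Y' \leq b\} \cap A_\epsilon \subseteq \{a - \epsilon \leq X' \leq b + \epsilon\}$, and splitting off $A_\epsilon^c$ gives
\begin{align*}
    \P(a \leq X'+Y' \leq b)
    \;\leq\;
    \P\big( \{a \leq X'+Y' \leq b\} \cap A_\epsilon \big) + \P(A_\epsilon^c)
    \;\leq\;
    \P(a - \epsilon \leq X' \leq b + \epsilon) + \P(|Y'| \geq \epsilon)\;.
\end{align*}

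For the lower bound the roles are reversed: if $a + \epsilon \leq X' \leq b - \epsilon$ and $|Y'| < \epsilon$, then $X'+Y' \geq (a+\epsilon) - \epsilon = a$ and $X'+Y' \leq (b-\epsilon) + \epsilon = b$, so $\{a+\epsilon \leq X' \leq b-\epsilon\} \cap A_\epsilon \subseteq \{a \leq X'+Y' \leq b\}$. Using $\P(E \cap A_\epsilon) \geq \P(E) - \P(A_\epsilon^c)$ with $E = \{a+\epsilon \leq X' \leq b-\epsilon\}$ then yields
\begin{align*}
    \P(a \leq X'+Y' \leq b)
    \;\geq\;
    \P\big( \{a+\epsilon \leq X' \leq b-\epsilon\} \cap A_\epsilon \big)
    \;\geq\;
    \P(a+\epsilon \leq X' \leq b-\epsilon) - \P(|Y'| \geq \epsilon)\;.
\end{align*}

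There is essentially no obstacle here: the only ingredients are the identity $|X' - (X'+Y')| = |Y'|$ and monotonicity/subadditivity of the probability measure, and both $X'$ and $Y'$ being random variables guarantees all the events are measurable. The single point worth stating carefully is that the claim must hold for arbitrary $a,b$, including the degenerate regime $b - \epsilon < a + \epsilon$ (or $b < a$), which is automatic since a probability over an empty interval is $0$ and the asserted lower bound is then vacuous. Since the result is exactly Lemma 25 of \cite{huang2024gaussian}, one may alternatively just cite it without reproducing the argument.
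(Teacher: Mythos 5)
The paper does not prove this lemma itself; it only cites Lemma~25 of \cite{huang2024gaussian}, so there is no in-paper argument to compare against. Your self-contained proof is correct: the event inclusions $\{a \leq X'+Y' \leq b\} \cap \{|Y'|<\epsilon\} \subseteq \{a-\epsilon \leq X' \leq b+\epsilon\}$ and $\{a+\epsilon \leq X' \leq b-\epsilon\} \cap \{|Y'|<\epsilon\} \subseteq \{a \leq X'+Y' \leq b\}$ are exactly right, the union bound and the estimate $\P(E \cap A_\epsilon) \geq \P(E) - \P(A_\epsilon^c)$ are used correctly, and your remark about the degenerate regime $b-\epsilon < a+\epsilon$ correctly observes that the lower bound is vacuous there. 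This is the standard and essentially the only natural way to prove such a smoothing inequality, so your route is surely the same one taken in the cited source.
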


\begin{proof}[Proof of \cref{prop:VD}] By \cref{lem:approx:XplusY:by:Y} followed by the Markov's inequality, we get that for every $t \in \R$ and $\epsilon > 0$,
\begin{align*}
    &
    \;
    \P( X'+Y' \leq t) 
    \;\leq\;
    \P( X' \leq t+\epsilon ) + \P(  |Y'| \geq \epsilon )
    \;\leq\;
    \P( X' \leq t+\epsilon ) + \mfrac{\Var[Y']}{\epsilon^2}
    \;,
    \\
    &\;
    \P( X'+Y' \leq t) 
    \;\geq\;
    \P( X' \leq t-\epsilon ) - \mfrac{\Var[Y']}{\epsilon^2}
    \;.
\end{align*}
Subtracting $\P(X' \leq t)$ from both sides, we get that 
\begin{align*}
    \big| \P( X'+Y' \leq t)  - \P(X' \leq t) \big| 
    \;\leq\; 
     \max\big\{ \;
        \P( X' \in (t-\epsilon, t]) 
    \,,\, 
        \P( X' \in (t, t+\epsilon]) 
    \big\} + \mfrac{\Var[Y']}{\epsilon^2}\;,
\end{align*}
and taking an infimum over $\epsilon > 0$ gives the first bound. The second bound follows by rescaling $t$ and $\epsilon$ at the same time by $\sigma_X' > 0$.
\end{proof}

\subsection{Proof of \cref{lem:universality:v}} The lower bound follows directly from Theorem 2 of \cite{huang2024gaussian} by noting that $n \tilde v_n = p^*_n$, so it suffices to prove the upper bound. Denote 
\begin{align*}
    S_n \;\coloneqq\; \mfrac{1}{\sqrt{n}} \msum_{i=1}^n \Big( \mfrac{1}{\sqrt{2}} U_{i;\sigma_n} + \mfrac{\sigma_n}{\sqrt{2}} \xi_{i1} \Big)
\end{align*}
such that, for $\xi \sim \cN(0,1)$ independent of all other variables,
\begin{align*}
    n \tilde v_n(Y) \;\overset{d}{=}&\; S_n + \chi^2_1 
    &\text{ and }&&
    n \tilde v_n(Z) \;\overset{d}{=}&\; \sigma_n \xi + \chi^2_1\;.
\end{align*}
By Lemmas 19 and 20 of \cite{huang2024gaussian}, under the choice of $\sigma_0$ and $\tilde N$ in Theorem 2 of \cite{huang2024gaussian}, we have that for all $n \geq \tilde N$,
\begin{align*}
    &\,
    \sup_{x \in \R} \Big| \P( S_n < x ) - \P( \sigma_n \xi < x ) - 
    \mfrac{A}{n^{1/2} \sigma_n^{\nu/(\nu-2)}} \Big( 1 - \mfrac{x^2}{\sigma_n^2} \Big) e^{-x^2/(2\sigma_n^2)} \Big|
    \;\leq\;
    \mfrac{2}{n \sigma^{2 \nu / (\nu-2)}} 
    \;,
    \\
    &\, 
    \big|  \P( S_n < x ) - \P( \sigma_n \xi < x ) \big| \;\leq\; B \bigg( \, \mfrac{1}{n^{1/2} \sigma_n^{\nu/(\nu-2)}} e^{-x^2 / 16 \sigma_n^2} + \mfrac{1}{n^{3/2} x^4 \sigma_n^{(8-\nu)/(\nu-2)}} \bigg)
\end{align*}
for some absolute constants $A,B > 0$. By splitting an integral and using these bounds, 
\begin{align*}
    &\;
    \big| \P( n \, \tilde v_n(Y) < t) - \P( n \, \tilde v_n(Z) < t) \big|
    \\
    &\;=\;
    \Big| \mint_{-\infty}^\infty \big( \P( S_n < t - y^2 ) -  \P( \sigma_n \xi < t - y^2 )  \big) \, \mfrac{e^{-y^2/2}}{\sqrt{2\pi}} dy \Big|
    \\
    &\;\leq\;
    \mint_{|t-y^2| <  \sigma_n} \big| \P( S_n < t - y^2 ) -  \P( \sigma_n \xi < t - y^2 )  \big| \, \mfrac{e^{-y^2/2}}{\sqrt{2\pi}} dy 
    \\
    &\qquad 
    +
    \mint_{|t-y^2| \geq  \sigma_n} \big| \P( S_n < t - y^2 ) -  \P( \sigma_n \xi < t - y^2 )  \big| \, \mfrac{e^{-y^2/2}}{\sqrt{2\pi}} dy 
    \\
    &\;\leq\;
    \mfrac{A}{\sqrt{2\pi}\, n^{1/2} \sigma_n^{\nu/(\nu-2)}}
    \mint_{|t-y^2| <  \sigma_n} \Big|  1 - \mfrac{(t-y^2)^2}{\sigma_n^2} \Big| \, e^{-\frac{(t-y^2)^2}{2\sigma_n^2}-\frac{y^2}{2}} dy 
    +
    \mfrac{4 \sqrt{ \sigma_n}}{n \sigma_n^{2\nu/(\nu-2)}}
    \\
    &\qquad 
    +
    \mfrac{B}{\sqrt{2\pi}\, n^{1/2} \sigma_n^{\nu/(\nu-2)}}
    \mint_{|t-y^2| \geq  \sigma_n} e^{-\frac{(t-y^2)^2}{16 \sigma_n^2} - \frac{y^2}{2} } dy 
    \\
    &\qquad
    +
    \mfrac{B}{\sqrt{2\pi} \, n^{3/2} \sigma_n^{(8-\nu)/(\nu-2)}} 
    \mint_{|t-y^2| \geq  \sigma_n} \mfrac{1}{(t-y^2)^4} \, e^{-\frac{y^2}{2}} dy
    \\
    &\;\leq\;
    \mfrac{2 A \sqrt{ \sigma_n}}{\sqrt{2\pi}\, n^{1/2} \sigma_n^{\nu/(\nu-2)}} 
    +
    \mfrac{4 \sqrt{ \sigma_n}}{n \sigma_n^{2\nu/(\nu-2)}}
    +
    \mfrac{B \sqrt{\sigma_n} }{\sqrt{2\pi}\, n^{1/2} \sigma_n^{\nu/(\nu-2)}}
    \mint_{|\sigma_n^{-1} t -y^2 | \geq 1} e^{-\frac{(\sigma_n^{-1} t-y^2)^2}{16}} dy 
    \\
    &\qquad 
    + \mfrac{B \sqrt{\sigma_n }}{\sqrt{2\pi}\,n^{3/2} \sigma_n^{(8-\nu)/(\nu-2)} \sigma_n^4} \mint_{|\sigma_n^{-1} t -y^2 | \geq 1} \, \mfrac{1}{(\sigma_n^{-1}  t - y^2)^4} \, dy 
    \;.
\end{align*}
In the last line, we have used a change-of-variable $y \mapsto \sqrt{\sigma_n} y$ and noted that $e^{-y^2/2} \leq 1$. Now fix $N \geq \tilde N$ such that $\sigma_n = \sigma_0 n^{-(\nu-2)/(2\nu)}$ for all $n \geq N$, which only depends on the absolute constant $\sigma_0 > 0$; in this case, 
\begin{align*}
    \mfrac{\sqrt{\sigma_n}}{n^{1/2} \sigma_n^{\nu/(\nu-2)}} \;=\; \sigma_0^{\frac{1}{2} - \frac{\nu}{\nu-2}} n^{-\frac{\nu-2}{4\nu}}
    \;,
    \qquad 
    \mfrac{\sqrt{\sigma_n}}{n^{3/2} \sigma_n^{(8-\nu)/(\nu-2)} \sigma_n^4} 
    \;=\;
    \sigma_0^{\frac{1}{2} - \frac{3\nu}{\nu-2}}
    n^{-\frac{\nu-2}{4\nu}}\,.
\end{align*}
Then there exists some constant $A'$ that depends only on $\sigma_0$ such that, for all $n > N$,
\begin{align*}
    &\;
    \big| \P( n \, \tilde v_n(Y) < t) - \P( n \, \tilde v_n(Z) < t) \big|
    \\
    &\;\leq\;
    A' n^{-\frac{\nu-2}{4\nu}}
    \Big( 1 
        +  \mint_{|\sigma_n^{-1} t -y^2 | \geq 1} e^{-\frac{(\sigma_n^{-1} t-y^2)^2}{16}} dy  
        +  \mint_{|\sigma_n^{-1} t -y^2 | \geq 1} \, \mfrac{1}{(\sigma_n^{-1}  t - y^2)^4} \, dy
    \Big)
    \\
    &\;\eqqcolon\;  A' n^{-\frac{\nu-2}{4\nu}} (1 + I_1(\sigma_n^{-1}t) + I_2(\sigma_n^{-1}t) )\;,
\end{align*}
where we write $I_1(\tau) \coloneqq \int_{|\tau - y^2| \geq 1} e^{-(\tau-y^2)^2/16} dy$ and $I_2(\tau) \coloneqq \int_{|\tau - y^2| \geq 1} (\tau-y^2)^{-4} dy$. To handle $I_1(\tau)$, we split the integral further and use a change-of-variable with $z=y^2$ to obtain 
\begin{align*}
    I_1(\tau) 
    \;\leq&\; \mint e^{-(\tau-y^2)^2/16} dy  
    \;=\; 
    \mint_{ y^2 < 1 } \, e^{-(\tau-y^2)^2/16}  dy  
    +
    2 \mint_{ y^2 \geq 1, y \geq 0 } \, e^{-(\tau-y^2)^2/16}  dy  
    \\
    \;\leq&\; 
    2 \, +\,  2 \mint_{z \geq 1}  e^{-(\tau-z)^2/16} \mfrac{1}{2\sqrt{z}} \, dz
    \\
    \;\leq&\;
    2 \, +\,  \mint_{z \geq 1}  e^{-(\tau-z)^2/16} \, dz
    \;\leq\;  2 \,+\, \sqrt{2 \pi \times 8}  \mint  \mfrac{e^{-(\tau-z)^2/16}}{\sqrt{2 \pi \times 8}} \, dz
    \;=\; 2 + 4\sqrt{\pi}\;.
\end{align*}
A similar strategy applied to $I_2(\tau)$ gives 
\begin{align*}
    I_2(\tau) 
    \;=&\;
    \mint_{|\tau - y^2| \geq 1, y^2 < 1} \, \mfrac{1}{(\tau-y^2)^4} \, dy 
    +
    2 \mint_{|\tau - y^2| \geq 1, y^2 \geq 1, y \geq 0} \, \mfrac{1}{(\tau-y^2)^4} \, dy 
    \\
    \;\leq&\;
    2 + 2 \mint_{|\tau - z| \geq 1, z \geq 1} \, \mfrac{1}{(\tau-z)^4} \mfrac{1}{2\sqrt{z}} \, dz
    \\
    \;\leq&\;
    2 + \mint_{|\tau - z| \geq 1} \, \mfrac{1}{(\tau-z)^4} \, dz
    \;=\;
    2 + \mint_{|z'| \geq 1} \, \mfrac{1}{(z')^4} \, d(z') \;=\; \mfrac{8}{3}\;.
\end{align*}
Substituting these two bounds back and noting that the resulted bounds do not depend on $t$, we get that there exist some constants $C > 0$ and $N \in \N$ that depend only on the absolute constant $\sigma_0 > 0$ such that for all $n \geq N$,
\begin{align*}
    \msup_{t \in \R} \big| \P( n \, \tilde v_n(Y) < t) - \P( n \, \tilde v_n(Z) < t) \big|
    \;\leq\; C n^{-\frac{\nu-2}{4\nu}} \;.
\end{align*}
\qed

\subsection{Proof of \cref{lem:universality:u}} Notice that
\begin{align*}
    \sqrt{n(n-1)} \tilde u_n \;=\; p^*_n - R_n \;=\; n \tilde v_n - R_n\;,
\end{align*}
and we already have the universality approximation bound for $n \tilde v_n$ from \cref{lem:universality:v}. It suffices to apply variance domination to approximate $\sqrt{n(n-1)} \tilde u_n$ by $n \tilde v_n$, which requires us to compute the relevant variances. Write $Z_i=(Z_{i1}, Z_{i2})$, where $Z_{i1}$ is the Gaussian component that match the first two moments of $U_{i;\sigma_n} + 2^{-1/2} \sigma_n \xi_{i1}$ and $Z_{i2}$ is the Gaussian component that matches $\xi_{i2} \sim \cN(0,1)$ in distribution. Then by independence, 
\begin{align*}
    \sigma_*^2 
    \;\coloneqq\; 
    \Var[p^*_n(Y)] 
    \;=&\;
    \Var[p^*_n(Z)] 
    \;=\; 
    \Var\Big[ \mfrac{1}{\sqrt{n}} \msum_{i=1}^n Z_{i1} \Big]
    +
    \Var\Big[ \Big( \mfrac{1}{\sqrt{n}} \msum_{i=1}^n Z_{i2}  \Big)^2 \Big]
    \\
    \;\geq&\;
    \Var\Big[ \Big( \mfrac{1}{\sqrt{n}} \msum_{i=1}^n Z_{i2}  \Big)^2 \Big]
    \;=\;
    \Var[ Z_{12}^2]
    \;=\;
    2
    \;.
\end{align*}
By independence again, 
\begin{align*}
    \Var[ R_n(Y) ] 
    \;=\;
    \Var[ R_n(Z) ] 
    \;=\;
    \Var\Big[ \mfrac{1}{n} \msum_{i=1}^n Z_{i2}^2  \Big]
    \;=\;
    \mfrac{\Var[Z_{i2}^2 ]}{n} 
    \;=\;
    \mfrac{2}{n} 
    \;,
\end{align*}
and also note that 
\begin{align*}
    \mean[R_n(Y)] \;=\; \mean[R_n(Z)] \;=\; 1 \;.
\end{align*}
We now prove the upper bound by combining the variance domination result in \cref{cor:VD} and the upper bound of \cref{lem:universality:v}. Let $\sigma_0$ be given as in \cref{lem:universality:v}. Since 
\begin{align*}
    \sqrt{n(n-1)} \, \tilde u_n(Y) \;=\; (p^*_n(Y) - 1) - (R_n - \mean[R_n(Y)])\;,
\end{align*}
there are some absolute constants $\tilde C' > 0$ and $N \in \N$ such that for all $n \geq N$ and every $t \in \R$,
\begin{align*}
    \big| 
     \P\big( & \sqrt{n(n-1)} \,  \tilde u_n(Y)  \leq t \big) - \P\big( p^*_n(Y) - 1 \leq t \big)
    \big|
    \\
    &
    \;\leq\;
    \tilde C' 
    \Big(\mfrac{\Var[R_n(Y)]}{\Var[p^*_n(Y)]}\Big)^{\frac{1}{5}}
    +
    2 \msup_{\tau \in \R} \big| 
        \P\big( \sigma_*^{-1} (p^*_n(Y) - 1) \leq \tau \big) 
        - 
        \P\big( \sigma_*^{-1} (p^*_n(Z) - 1)\leq \tau \big) \big|
    \\
    &
    \;\leq\;
    \tilde C' n^{-\frac{1}{5}} + 2 C n^{-\frac{\nu-2}{4\nu}}
    \;\leq\; 
    \tilde C'_*  n^{-\frac{\nu-2}{4\nu}}
\end{align*}
for some absolute constants $C, \tilde C'_* > 0$; in the last line, we have used that $\frac{\nu-2}{4\nu} \leq \frac{1}{8} < \frac{1}{5}$ for $\nu \in (2,3]$. Similarly we have 
\begin{align*}
    \big| 
     \P\big( \sqrt{n(n-1)} \,  \tilde u_n(Z)  \leq t \big) - \P\big( (p^*_n(Z) - 1) \leq t \big)
    \big|
    \;\leq\;  \tilde C'_* n^{-\frac{\nu-2}{4\nu}}\;.
\end{align*}
Combining both bounds with the upper bound of \cref{lem:universality:v} by a triangle inequality, we get the desired upper bound that for some absolute constant $C' > 0$ and all $n \geq N$,
\begin{align*}
    \msup_{t \in \R} \Big| \P\Big( \sqrt{n(n-1)} \, \tilde u_n(Y) \leq t\Big) - \P\Big( \sqrt{n(n-1)} \, \tilde u_n(Z) \leq t\Big) \Big|
    \;\leq\;
    C' n^{-\frac{\nu-2}{4\nu }}
    \;.
\end{align*}
For the lower bound, we apply Lemma 21 of \cite{huang2024gaussian}: There exist some constants $c_* > 0$ and $\tilde N \in \N$ depending only on the fixed constant $\sigma_0$---which is chosen to be the one used in Theorem 2 of \cite{huang2024gaussian} and also \cref{lem:universality:v}---such that for any $n \geq \tilde N$,
\begin{align*}
    \P\big(  p^*_n(Z)< - 2\sigma_n \big) - \P\big(  p^*_n(Y) < - 2\sigma_n \big) \;\geq\; c_* \, n^{-\frac{\nu-2}{4\nu}}\;.
\end{align*}
To exploit this lower bound, we shall apply \cref{lem:approx:XplusY:by:Y} directly: For any $\epsilon > 0$,
\begin{align*}
    \P\big( &\sqrt{n(n-1)} \, \tilde u_n(Z) < - 2 \sigma_n - \sigma_* \epsilon - 1 \big)
    -
    \P\big( \sqrt{n(n-1)} \, \tilde u_n(Y) < - 2 \sigma_n - \sigma_* \epsilon - 1 \big)
    \\
    \;=&\;
    \P\big( \sqrt{n(n-1)} \, \tilde u_n(Z) < - 2 \sigma_n - \sigma_* \epsilon - 1 \big)
    -
    \P\big( p^*_n(Z) - 1 <  -2\sigma_n - 2 \sigma_* \epsilon - 1 \big)
    \\
    &\; 
    -
    \P\big( -2 \sigma_n - 2 \sigma_* \epsilon \leq p^*_n(Z) <  -2\sigma_n   \big)
    \\
    &\; 
    +
    \P\big(  p^*_n(Z)< - 2\sigma_n \big) - \P\big(  p^*_n(Y) < - 2\sigma_n \big) 
    \\
    &\;
    +
    \P\big(  p^*_n(Y) - 1 < - 2\sigma_n - 1 \big) 
    -
    \P\big( \sqrt{n(n-1)} \, \tilde u_n(Y) < - 2 \sigma_n - \sigma_* \epsilon - 1 \big)
    \\
    \;\geq&\;
    - 2 \P\big( \big| R_n(Y) - 1 \big| \geq \sigma_* \epsilon \big)
    - \P\big( | p^*_n(Z) + 2\sigma_n + \sigma_* \epsilon| \leq  \sigma_* \epsilon  \big)
    + c_* \, n^{-\frac{\nu-2}{4\nu}}
    \\
    \;\overset{(a)}{\geq}&\;
    - 
    \mfrac{2 \Var[R_n(Y)]}{\sigma^2_* \epsilon^2}
    - 
    \mfrac{\tilde c'_* (\sigma_* \epsilon)^{1/2}}{(\mean[ |p^*_n(Z) + 2\sigma_n + \sigma_* \epsilon|^2 ])^{1/4}}
    + 
    c_* \, n^{-\frac{\nu-2}{4\nu}}
    \\
    \;\overset{(b)}{\geq}&\; 
    -
    \mfrac{2}{\epsilon^2 n} 
    -
    \tilde c'_* \epsilon^{1/2}
    + 
    c_* \, n^{-\frac{\nu-2}{4\nu}}
\end{align*}
for some absolute constant $\tilde c'_* > 0$. In $(a)$, we have used the Markov's inequality and the Carbery-Wright inequality (\cref{lem:carbery:wright}); in $(b)$, we have plugged in the bounds on $\Var[R_n(Y)]$ and $\sigma_*$, and noted that $\mean[ |p^*_n(Z) + 2\sigma_n + \sigma_* \epsilon|^2 ] \geq \Var[p^*_n(Z)] = \sigma_*^2$. Taking $\epsilon = n^{-2/5}$ gives 
\begin{align*}
    \sup_{t \in \R} \Big| \P\Big( \sqrt{n(n-1)} \, \tilde u_n(Y) \leq t\Big) \,-\, \P\Big( \sqrt{n(n-1)} \, \tilde u_n(Z) \leq t\Big) \Big|
    \geq
    c_* \, n^{-\frac{\nu-2}{4\nu}} 
    - 
    \Big(\mfrac{4}{5} + \tilde c'_*\Big) n^{-\frac{1}{5}}\,,
\end{align*}
Since $\frac{\nu-2}{4\nu} \leq \frac{1}{8} < \frac{1}{5}$, there are some absolute constants $c'> 0$ and $N' \geq \tilde N$ such that the desired lower bound holds for all $n \geq N'$. \qed

\subsection{Proof of \cref{lem:chi:square}} Recall from the proof of \cref{lem:universality:u} that $\mean[R_n(Z)] = 1$, $\Var[R_n(Z)] = \frac{2}{n}$ and that $\Var[ n \tilde v_n(Z) ] = \Var[  p^*_n(Z) ]  \geq 2 $. Since
\begin{align*}
    \sqrt{n(n-1)} \, \tilde u_n(Z) 
    \;=\; 
    n \tilde v_n(Z) - \mfrac{1}{n} \msum_{i=1}^n Z_{i2}^2 
    \;=\;
    \big( n \tilde v_n(Z) - 1 \big) 
    - 
    \Big( \mfrac{1}{n} \msum_{i=1}^n Z_{i2}^2  - 1 \Big)\;,
\end{align*}
we can again apply variance domination (\cref{cor:VD}) to obtain that
\begin{align*}
    \sup_{t \in \R}
    \Big| 
        \P\Big(  \sqrt{n(n-1)}\, \tilde u_n(Z)  \leq t \Big) 
        -
        \P\Big(  n \tilde v_n(Z) - 1 \leq t \Big)
    \Big|
    \;\leq&\;
    C \bigg( \mfrac{\Var[ R_n(Z) ]}{ \Var[ n \tilde v_n(Z) - 1] } \bigg)^{\frac{1}{5}}
    \;=\;
    C n^{-\frac{1}{5}}
\end{align*}
for some absolute constant $C > 0$. Noting that $ n \tilde v_n(Z) - 1 \overset{d}{=} \sigma_n \xi + \overline{\chi^2_1}$ finishes the proof.
\qed

\subsection*{Acknowledgements}
This work has been supported by the Gatsby Charitable Foundation.

\newpage 
\bibliography{ref}

\end{document}